\newtheorem{theorem}{Theorem}
\newtheorem{lemma}{Lemma}
\newtheorem{remark}{Remark}
\newtheorem{assumption}{Assumption}
\newcommand{\fy}[1]{{\color{black}#1}}
\newcommand{\me}[1]{{\color{black}#1}}
\title{\LARGE \bf
 Distributed Gradient Tracking Methods with Guarantees for Computing a Solution to Stochastic MPECs
}
\author{Mohammadjavad Ebrahimi$^{1}$, Uday V. Shanbhag$^{2}$, and Farzad Yousefian$^{1}$
\thanks{*This work was funded in part by the ONR grants N$00014$-$22$-$1$-$2757$ and N$00014$-$22$-$1$-$2589$, and in part by the DOE grant DE-SC$0023303$.}
\thanks{$^{1}$Ebrahimi and Yousefian are with the Department of Industrial and Systems Engineering, Rutgers, the State University of New Jersey, United States.
        {\tt\small me586@scarletmail.rutgers.edu, farzad.yousefian@rutgers.edu}. $^{2}$Shanbhag is with the Department of Industrial and Manufacturing Engineering, The Pennsylvania State University, United States.
        {\tt\small udaybag@psu.edu}.}%
}
\begin{document}

\maketitle
\thispagestyle{empty}
\pagestyle{empty}

\begin{abstract}
We consider a class of hierarchical multi-agent optimization problems over networks where agents seek to compute an approximate solution to a single-stage stochastic mathematical program with equilibrium constraints (MPEC). MPECs subsume several important problem classes including Stackelberg games, bilevel programs, and traffic equilibrium problems, to name a few. Our goal in this work is to provably resolve stochastic MPECs in distributed regimes where \fy{the agents only have} access to their local objectives \fy{and an inexact best-response to the lower-level equilibrium problem}. To this end, we devise a new method called  {\it {\bf randomized \fy{smoothed} distributed zeroth-order gradient tracking}} (rs-DZGT). This is a novel gradient tracking scheme where agents employ \fy{a} zeroth-order implicit scheme to approximate their (unavailable) local gradients. Leveraging the properties of a randomized smoothing technique, we establish the convergence of the method and derive complexity guarantees for computing a stationary point \fy{of an optimization problem with} a smoothed implicit global objective. We also provide preliminary numerical experiments where we compare the performance of rs-DZGT on networks under different settings with that of its centralized counterpart.

\end{abstract}

\section{INTRODUCTION}
We consider an in-network hierarchical distributed optimization problem among $m$ agents, of the form
\begin{equation}\label{eqn:prob}
 \begin{aligned}
\hbox{minimize}_x&\quad f(x)\ \triangleq \frac{1}{m}\sum_{i=1}^m \mathbb{E}_{\xi_i}\left[\tilde{h}_i(x,z(x),\xi_i)\right]\\
 \hbox{subject to} &\quad  z(x) \in \mbox{SOL}(\mathcal{Z}(x),F(x,\bullet)), 
\end{aligned} 
\end{equation}
where agent $i$ is associated with a stochastic local objective function $\tilde{h}_i:\mathbb{R}^n\times  \mathbb{R}^p \times  \mathbb{R}^d\to \mathbb{R}$ and an independent random vector $\xi_i \in \mathbb{R}^d$. Here, $F:\mathbb{R}^n\times  \mathbb{R}^p  \to \mathbb{R}^p$ is a real-valued mapping and $\mathcal{Z}(x) \subseteq \mathbb{R}^p$ is a \fy{set, parametrized} by the variable $x$. 
Mapping \fy{$z: \mathbb{R}^n \to \mathbb{R}^p$} denotes the (unknown) solution to a parametric variational inequality (VI) problem, denoted by $\mbox{VI}(\mathcal{Z}(x),F(x,\bullet))$ with the solution set denoted by $\mbox{SOL}(\fy{\mathcal{Z}}(x), F(x,\bullet))$. Recall that, given an $x \in \mathbb{R}^n$, $z \in \mathcal{Z}(x)$ solves the aforementioned VI if $F(x,z)^T(\tilde{z}-z) \geq 0$ for all $\tilde z \in \mathcal{Z}(x)$. Throughout, we assume that $F$ is an expectation-valued mapping given as $F(x,z) \triangleq \mathbb{E}[\tilde{F}(x,z,\zeta)]$ where $\zeta \in \mathbb{R}^d$ is a random variable. When $m=1$, problem \eqref{eqn:prob} boils down to \fy{a single-stage} stochastic variant of mathematical programs with equilibrium constraints (MPEC). \fy{The} MPEC is an immensely powerful mathematical formulation that has been applied in addressing important problem classes including hierarchical optimization \fy{and} Stackelberg games, to name a few~\cite{outrata2013nonsmooth,luo1996mathematical}. In deterministic, small-scale, and centralized settings, nonlinear programming approaches, including interior-point schemes~\cite{anitescu2000solving} and sequential quadratic programming~\cite{fletcher2006local} have been developed. Another avenue for resolving MPECs lies in implicit programming approaches~\cite{kovcvara2004optimization,luo1996mathematical}. The aforementioned schemes, however, cannot accommodate the presence of uncertainty and do not scale well with the problem size. This shortcoming motivated the development of methods for addressing stochastic MPECs (SMPECs), including sample-average approximation (SAA)~\cite{shapiro2008stochastic}, among others. However, the SAA problems are often difficult to solve in large-scale settings as the number of constraints grows linearly with the sample size. Addressing this limitation, in our recent work~\cite{cui2104complexity}, we consider SMPECs and develop a class of inexact smoothing-enabled zeroth-order methods (ZSOL) applied to the implicit formulation with a Lipschitz continuous objective. ZSOL provides \fy{amongst the first} convergence rate guarantees for resolving SMPECs in convex and nonconvex cases. 

\noindent {\bf Gap.} Despite this promise, however, ZSOL can only accommodate centralized regimes where the problem information is accessible by a single computing agent and as such, addressing SMPECs in distributed settings has remained open. Accordingly, the main goal in this work is to compute an approximate solution to distributed SMPECs over multi-agent networks where the information \fy{of the objective function} is only locally known by a group of agents that can communicate over an undirected network. \fy{We assume that each agent has access to an inexact best response to the lower-level equilibrium, with a prescribed accuracy.} Recently, gradient tracking (GT) methods have been developed for solving standard distributed (and stochastic) optimization problems in convex~\cite{pu2021distributed,sun2022distributed} and nonconvex cases~\cite{lu2019gnsd}. In particular, GT schemes bridge the gap between centralized and distributed optimization by being equipped with the same speed of convergence as their centralized counterparts. \fy{However, the} existing GT methods mainly address unconstrained optimization problems and cannot accommodate MPECs. 

\noindent {\bf Contributions.} In this work, \fy{we develop} a class of gradient tracking methods for addressing problem~\eqref{eqn:prob} over undirected networks and make the following main contributions. {\bf (i)} We devise a new method called  {\it randomized \fy{smoothed} distributed zeroth-order gradient tracking} (rs-DZGT). This is a novel GT scheme where agents employ an inexact zeroth-order implicit scheme to approximate their (unavailable) local gradients. {\bf (ii)} Leveraging properties of \fy{randomized smoothing techniques}, we establish the convergence of the method and derive complexity guarantees for computing a stationary point \fy{of an optimization problem with} a smoothed implicit global objective. {\bf (iii)} We also provide preliminary numerical experiments where we compare the performance of rs-DZGT on networks under different settings with that of its centralized counterpart, ZSOL.
 
\noindent {\bf Outline of the paper.} The remainder of the paper is organized as follows. After presenting the notation, as follows, and the preliminaries in Section~\ref{sec:prelim}, the outline of the algorithm is presented in Section~\ref{sec:alg}. We then provide the convergence analysis in Section~\ref{sec:conv} and derive the main complexity result in Thm.~\ref{thm:main}. Preliminary numerical results and concluding remarks are presented in Sections~\ref{sec:num} and~\ref{sec:conc}, respectively. 

\noindent {\bf Notation.}
Throughout, we let $f_i(x) \triangleq \mathbb{E}_{\xi_i}\left[\tilde{h}_i(x,z(x),\xi_i)\right] $ and $\tilde{f}_i(x,\xi_i) \triangleq  \tilde{h}_i(x,z(x),\xi_i)    $ denote the local deterministic and stochastic implicit objective functions. We further define 
\begin{align}\label{eqn:notations}
&\mathbf{x} \triangleq  [x_1,x_2,\ldots,x_m]^T, \quad   \mathbf{y}  \triangleq  [y_1,y_2,\ldots,y_m]^T \in \mathbb{R}^{m \times n}, \notag\\
&\bar{x} \triangleq  \tfrac{1}{m}\mathbf{1}^T \mathbf{x}   \in \mathbb{R}^{1 \times n}, \quad  \bar{y} \triangleq  \tfrac{1}{m}\mathbf{1}^T \mathbf{y}    \in \mathbb{R}^{1 \times n},\notag\\
& f(x) \triangleq  \tfrac{1}{m}\textstyle\sum_{i=1}^{m} f_i(x), \quad  \mathbf{f}(\mathbf{x}) \triangleq  \tfrac{1}{m}\textstyle\sum_{i=1}^{m} f_i(x_i),\notag\\
&   \mathbf{f}^\eta(\mathbf{x}) \triangleq  \tfrac{1}{m}\textstyle\sum_{i=1}^{m} f_i^\eta(x_i)  ,\quad   \boldsymbol{\xi} \triangleq [\xi_1,\xi_2,\ldots,\xi_m]^T\in \mathbb{R}^{m \times d}\notag ,\\
& \nabla \mathbf{f}^\eta(\mathbf{x}) \triangleq [\nabla f_1^\eta(x_{1}), \ldots,  \nabla f_m^\eta(x_{m})]^T \in \mathbb{R}^{m \times n},
\end{align}
where $ f_i^\eta$ is $\eta$-smoothed variant of $f_i$ that will be formally defined. We let $\mathbf{1} \in \mathbb{R}^{m \times 1}$ denote the vector whose elements are all one. Given $i \in [m]$, $x,v \in \mathbb{R}^n$, $z_1,z_2 \in \mathbb{R}^p$, and $\xi \in \mathbb{R}^d$, we define a local zeroth-order gradient estimator as 
\begin{align}\label{eqn:ZO_oracle}
\hat{g}_i(x,v,z_1,z_2,\xi) \triangleq \left(\tfrac{n(\tilde h_i(x+v,z_2,\xi)-\tilde h_i(x,z_1,\xi))}{\eta}\right)\frac{v}{\|v\|}.
\end{align}
Lastly, we let $\|\bullet\|$ denote the Frobenius norm of a matrix. 
\section{Preliminaries}\label{sec:prelim}
Consider \fy{$f$ as given} in problem~\eqref{eqn:prob}. A key challenge in addressing MPECs is that the implicit function $f$ is often nondifferentiable \fy{and} nonconvex. Further, an \fy{analytical} expression for $z(x)$ is often unavailable, which in turn, makes both zeroth- and first-order information of $f$ unavailable. Contending with these challenges, we utilize a smoothing technique that finds its root in the work by Steklov~\cite{steklov1907expressions} and has been employed in both convex \cite{lakshmanan2008decentralized,yousefian2012stochastic} and nonconvex regimes \cite{nesterov2017random}. Given a continuous function $h$ and a smoothing parameter $\eta>0$,  $h^{\eta}(x)\triangleq \mathbb{E}_{u\in \mathbb{B}}\left[h(x+\eta u)\right]$ is a smoothed function, where $u$ is a random vector in the unit ball $\mathbb{B}$, defined as $\mathbb{B}\triangleq \{u\in \mathbb{R}^n\mid \|u\|\le 1\}$. Throughout,
we let $\mathbb{S}$ denote the surface of the ball $\mathbb{B}$, in other words, $\mathbb{S}\triangleq \{u\in \mathbb{R}^n\mid \|u\|= 1\}$. 
\begin{lemma}[{\cite[Lemma 1]{cui2104complexity}}]\label{Properties of spherical smoothin} 
Consider \fy{$h^{\eta}$ as defined above}. \fy{Then, the following results hold.}

\noindent (i) The smoothed function $h^{\eta}$ is continuously differentiable and 
$\nabla_x h^{\eta}(x)=\left(\tfrac{n}{\eta}\right)\mathbb{E}_{v\in \eta \mathbb{S}}\left[\left(h(x+v)-h(x)\right)\tfrac{v}{\|v\|}\right].$

\noindent (ii) Suppose $h$ is Lipschitz continuous with parameter $L_0$. For any $x , y$ we have $\|\nabla h^{\eta}(x)-\nabla h^{\eta}(y)\|\le \frac{L_0n}{\eta}\|x-y\|$.

\noindent \fy{(iii) $|h^{\eta}(x) -h(x)| \leq L_0\eta,$ for any $x \in \mathbb{R}^n$.}
\end{lemma}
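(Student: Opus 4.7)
The plan is to establish both parts by converting the volume-integral definition of $h^\eta$ into a surface integral on $\eta\mathbb{S}$, from which differentiation and the Lipschitz estimate follow transparently.

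For part (i), I would begin with $h^\eta(x) = \frac{1}{c_n}\int_{\mathbb{B}} h(x+\eta u)\,du$, where $c_n$ denotes the volume of $\mathbb{B}$, and then apply the changes of variable $v = \eta u$ followed by $y = x+v$ to rewrite it as $h^\eta(x) = \frac{1}{c_n \eta^n}\int_{x+\eta\mathbb{B}} h(y)\,dy$. Assuming first that $h$ is $C^1$, differentiating under the integral and applying the divergence theorem componentwise (using that $\nabla_x h(x+v) = \nabla_v h(x+v)$ with outward unit normal $v/\|v\|$ on $\eta\mathbb{S}$) produces
\begin{align*}
\nabla h^\eta(x) = \tfrac{1}{c_n \eta^n}\textstyle\int_{\eta\mathbb{S}} h(x+v)\,\tfrac{v}{\|v\|}\,dS(v).
\end{align*}
Rewriting this surface integral as an expectation against the uniform distribution on $\eta\mathbb{S}$, whose total surface measure is $n c_n \eta^{n-1}$, produces the prefactor $n/\eta$. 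Since $\mathbb{E}_{v\in \eta\mathbb{S}}[v/\|v\|]=0$ by the reflective symmetry of the uniform measure on the sphere, the constant term $-h(x)$ can be inserted inside the expectation without changing its value, yielding the claimed formula. To remove the $C^1$ hypothesis, I would approximate $h$ by its mollifications $h_\epsilon = h*\phi_\epsilon$, apply the formula to each $h_\epsilon$, and pass to the limit $\epsilon\downarrow 0$ using uniform convergence on compact sets. Continuous differentiability of $h^\eta$ then follows from the dominated convergence theorem applied to the surface-integral representation, using continuity of $h$ and compactness of $\eta\mathbb{S}$.

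For part (ii), I would apply the surface-integral formula at both $x$ and $y$ and subtract to obtain
\begin{align*}
\nabla h^\eta(x) - \nabla h^\eta(y) = \tfrac{n}{\eta}\,\mathbb{E}_{v\in\eta\mathbb{S}}\!\left[(h(x+v) - h(y+v))\,\tfrac{v}{\|v\|}\right].
\end{align*}
Since $h$ is $L_0$-Lipschitz, $|h(x+v)-h(y+v)|\le L_0\|x-y\|$, and since $\|v/\|v\|\|=1$, Jensen's inequality for norms delivers $\|\nabla h^\eta(x)-\nabla h^\eta(y)\| \le (L_0 n/\eta)\|x-y\|$.

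The main subtlety, in my view, is the rigorous justification of the divergence-theorem step when $h$ is only continuous. The mollification route above resolves this but must be executed carefully, confirming that both sides of the identity are stable under the approximation. An alternative, more direct route would be to compute the directional derivative from first principles: for a unit vector $e_k$, the increment $h^\eta(x+t e_k) - h^\eta(x)$ equals the integral of $h$ over the symmetric difference of the two translated balls, which is a shell of width $O(t)$ adjacent to $\eta\mathbb{S}$ whose signed contribution, to leading order in $t$, reproduces the surface integral and thereby identifies the gradient.
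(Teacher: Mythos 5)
Your argument is correct and is essentially the standard divergence-theorem proof of this result; note that the paper itself gives no proof here but imports the lemma from \cite{cui2104complexity}, where the same Stokes/Gauss--Green computation (volume integral to surface integral, normalization by the sphere's surface measure $nc_n\eta^{n-1}$, and insertion of $-h(x)$ via $\mathbb{E}_{v\in\eta\mathbb{S}}[v/\|v\|]=0$) is the underlying argument. Your handling of the merely-continuous case by mollification, and the Jensen-plus-Lipschitz step for part (ii), are both sound and yield exactly the stated constant $L_0 n/\eta$.
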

\section{Algorithm outline}\label{sec:alg}
The outline of the proposed method is presented by Algorithms~\ref{alg:DZGT} and~\ref{alg:lowerlevel}. Algorithm~\ref{alg:DZGT} is an inexact zeroth-order GT method applied on the implicit problem. Here agent $i$ generates iterates $x_{i,k}$ and $y_{i,k}$. Of these, the former is updated in step 6 using a weight matrix $\mathbf{w} \in \mathbb{R}^{m\times m}$ while the latter is a zeroth-order gradient tracker being updated in step 10. The scalar $\gamma>0$ denotes a stepsize parameter. We highlight two key design elements in this method. (i) The local implicit objectives $\tilde{h}_i(\bullet,z(\bullet))$ are generally nondifferentiable \fy{and} nonconvex. To address the nonsmoothness, we employ the randomized smoothing technique providing each agent with a local stochastic zeroth-order gradient. (ii) As mentioned earlier, even zeroth-order information of the local implicit objectives $\tilde{h}_i(\bullet,z(\bullet))$ is unavailable. We address this through step 9 where only inexact evaluations of $z(\bullet)$ are used. To compute this inexact value, denoted by $z_{\varepsilon_k}(\bullet)$, \fy{we} utilize a standard stochastic approximation method, outlined in Algorithm~\ref{alg:lowerlevel}. Notably, the inexactness level in $z_{\varepsilon_k}(\bullet)$ is crucial in the convergence analysis and is rigorously controlled \fy{by} prescribing a termination criterion in Algorithm~\ref{alg:lowerlevel} \fy{given as} $t_k:=\sqrt{k+1}$. Indeed, this criterion will be derived in the analysis in Theorem~\ref{thm:main} to establish \fy{the convergence} result. 
\begin{algorithm}
\caption{randomized \fy{smoothed} distributed zeroth-order gradient tracking (rs-DZGT)}
\label{alg:DZGT}
\begin{algorithmic}[1]
\State {\bf input} a doubly stochastic weight matrix $\mathbf{w}$, stepsize $\gamma$ and smoothing parameter $\eta$, and local random initial points $x_{i,0} \in \mathbb{R}^n$ for all $i \in [m]$
\State For all $i \in [m]$, agent $i$ generates initial random samples $\xi_{i,0}$ and $v_{i,0} \in \eta \mathbb{S}$ 
\State Call Algorithm~\ref{alg:lowerlevel} to get inexact solutions $z_{\varepsilon_0}({x}_{i,0})$ and $z_{\varepsilon_0}({x}_{i,0}+v_{i,0})$ 
\State Use equation~\eqref{eqn:ZO_oracle} to obtain $y_{i,0}:=g_{i,0}^{\eta,\varepsilon_0}$ where 
\begin{align*}
&g_{i,0}^{\eta,\varepsilon_0}\triangleq \hat{g}_i(x_{i,0},v_{i,0},z_{\varepsilon_0}({x}_{i,0}),z_{\varepsilon_0}({x}_{i,0}+v_{i,0}),\xi_{i,0})
\end{align*}
 \FOR {$k = 0,1,2, \ldots$} {\bf in parallel by all agents} 
 \State ${x}_{i,k+1}:=\sum_{j=1}^mW_{ij}x_{j,k}-\gamma y_{i,k}$
\State Generate random samples $\xi_{i,k+1}$ and $v_{i,k+1} \in \eta \mathbb{S}$ 
\State Call Algorithm~\ref{alg:lowerlevel} to get inexact solutions $z_{\varepsilon_{k+1}}({x}_{i,k+1})$ and $z_{\varepsilon_{k+1}}({x}_{i,k+1}+v_{i,k+1})$
\State Use equation~\eqref{eqn:ZO_oracle} to obtain
\begin{align*}
 g_{i,{k+1}}^{\eta,\varepsilon_{k+1}} \triangleq \hat{g}_i(&x_{i,{k+1}},v_{i,{k+1}},z_{\varepsilon_{k+1}}({x}_{i,{k+1}}),\\ &z_{\varepsilon_{k+1}}({x}_{i,{k+1}}+v_{i,{k+1}}),\xi_{i,{k+1}})
\end{align*}
\State $y_{i,k+1} := \sum_{j=1}^mW_{ij}y_{i,k}+g_{i,k+1}^{\eta,\varepsilon_{k+1}}-g_{i,k}^{\eta,\varepsilon_{k}}$
\ENDFOR
\end{algorithmic}
\end{algorithm}

\begin{algorithm}
\caption{Stochastic approximation for lower-level VI}
\label{alg:lowerlevel}
\begin{algorithmic}[1]
\State {\bf input} upper-level iteration index $k$, a vector $\hat{x}_k$, an arbitrary $z_0 \in \mathcal{Z}(\hat x_k)$, scalars $\hat{\gamma} > \frac{1}{2\mu_F} $ and $\Gamma >0$
 \FOR {$t = 0,1, \ldots, t_k:=\sqrt{k+1} $} 
\State Evaluate the stochastic map $\tilde{F}(\hat{x}_k,z_t,\zeta_t)$
\State Do the update $z_{t+1}:= \Pi_{\mathcal{Z}(\hat x_k)}\left[z_t-\hat{\gamma}_t\tilde{F}(\hat{x}_k,z_t,\zeta_t)\right]$
\State Update the stepsize as $\hat{\gamma}_{t+1}:=\frac{\hat\gamma}{t+1+\Gamma}$
\ENDFOR
\end{algorithmic}
\end{algorithm}
We now provide formal statements of the assumptions. 
\begin{assumption}\label{mixxx} 
The mixing matrix $\mathbf{w} \in \mathbb{R}^{m \times m}$ is symmetric and doubly stochastic and $\rho \triangleq |\underline{\lambda}_{max}(\mathbf{w})|<1$, where $\underline{\lambda}_{max}(\mathbf{w})$ denotes the second largest eigenvalue of $\mathbf{w}$.
\end{assumption}
We note that under Assumption~\ref{mixxx}, $\|\mathbf{w}-\frac{1}{m}\mathbf{1}\mathbf{1}^T\|<1$.
\begin{assumption}\label{assum:samples} For each $i \in [m]$, $\{\xi_{i,k}\}$ and $\{v_{i,k}\}$ are both iid \fy{where $v_{i,k} \in \eta\mathbb{S}$}. Also, $\{\xi_{i,k}\}$ and $\{v_{i,k}\}$ are independent. 
\end{assumption}
\me{\begin{assumption}\label{lower bound for the function}
 \fy{Consider $f$ as given in \eqref{eqn:notations}. Suppose $\inf_{x \in \mathbb{R}^n} f(x) > -\infty$.}
\end{assumption}
\begin{remark}
\fy{In view of Lemma~\ref{Properties of spherical smoothin}, $f(x)-L_0\eta \le f^{\eta}(x)$. Under Assumption~\ref{lower bound for the function} it follows $\inf_{x \in \mathbb{R}^n} f^{\eta}(x) > -\infty$.} 
 
\end{remark}}
\begin{assumption}\label{assum:main1}
For any agent $i \in [m]$, $\tilde h_i(x,\bullet,\xi_i)$ is $\tilde L_0(\xi_i)$-Lipschitz continuous for any $\xi_i$, and $\tilde L_0 \triangleq  \max_{i\in [m]}\sqrt{\mathbb{E}[\tilde L_0^2(\xi_i)]}$ is finite. Also, $\tilde h_i(\bullet,z(\bullet),\xi_i)$ is $ L_0(\xi_i)$-Lipschitz continuous for any $\xi_i$, and $ L_0 \triangleq \max_{i\in [m]}\sqrt{\mathbb{E}[ L_0^2(\xi_i)]}$ is finite.
\end{assumption}
\begin{assumption}\label{assum:main2} $F(x,\bullet)$ is a $\mu_F$-strongly monotone and $L_F$-Lipschitz continuous mapping uniformly in $x$. For any $x \in \mathbb{R}^n$, the set $\mathcal{Z}(x)\subseteq \mathbb{R}^p$ is nonempty closed convex. 
\end{assumption}
\begin{remark}
Notably, we do not assume that the implicit objective is differentiable. The Lipschitz continuity of the implicit function in Assumption~\ref{assum:main1} has been studied in \cite{patriksson1999stochastic} and holds under mild conditions.
\end{remark}

\section{Convergence theory}\label{sec:conv}
Throughout, we utilize the definitions of the exact and inexact local stochastic zeroth-order gradient as follows.
\begin{align}
&g_{i,k}^{\eta} \triangleq \hat{g}_i(x_{i,k},v_{i,k},z(x_{i,k}),z(x_{i,k}+v_{i,k}),\xi_{i,k}),\label{eqn:g_eta}\\
&g_{i,k}^{\eta,\varepsilon_k} \triangleq \hat{g}_i(x_{i,{k}},v_{i,{k}},z_{\varepsilon_{k}}({x}_{i,{k}}),z_{\varepsilon_{k}}({x}_{i,{k}}+v_{i,{k}}),\xi_{i,{k}}).\label{eqn:g_eta_eps}
\end{align}
We also use the following error terms for $i \in [m]$ and $k\geq 0$. 
\begin{align*}
&\delta_{i,k}^{\eta}\triangleq g_{i,k}^{\eta}-\nabla_x f_i^\eta(x_{i,k}), \qquad  \boldsymbol{\delta}_{k}^{\eta} \triangleq [\delta_{1,k}^{\eta},\ldots,\delta_{m,k}^{\eta}]^T\\
& \omega_{i,k}^{\eta,\varepsilon_{k}}\triangleq g_{i,k}^{\eta,\varepsilon_{k}} - g_{i,k}^\eta, \qquad \boldsymbol{\omega}_{k}^{\eta,\varepsilon_{k}} \triangleq [\omega_{1,k}^{\eta,\varepsilon_{k}},\ldots,\omega_{m,k}^{\eta,\varepsilon_{k}}]^T.
\end{align*}
Here we use $\delta_{i,k}^{\eta}$ to denote the stochastic error of the exact smoothed local zeroth-order gradient and $\omega_{i,k}^{\eta,\varepsilon_{k}}$ to denote the zeroth-order local gradient estimation error due to the inexact calls to the lower-level oracle (see step \#9 in Alg.~\ref{alg:DZGT}). From the above definitions, the main update rules of Algorithm~\ref{alg:DZGT} can be compactly cast for all $k\geq 0$ as
\begin{align}
\mathbf{x}_{k+1} &:= \mathbf{w}\,\mathbf{x}_k-\gamma \mathbf{y}_k,\label{eqn:compact_alg1}\\
\mathbf{y}_{k+1} &:= \mathbf{w}\, \mathbf{y}_k +  \nabla_{\mathbf{x}}\mathbf{f}^\eta(\mathbf{x}_{k+1})-\nabla_{\mathbf{x}} \mathbf{f}^\eta(\mathbf{x}_{k}) \notag\\ 
&+\boldsymbol{\delta}_{k+1}^{\eta}-\boldsymbol{\delta}_{k}^{\eta}+\boldsymbol{\omega}_{k+1}^{\eta,{\varepsilon_{k+1}}}-\boldsymbol{\omega}_{k}^{\eta,{\varepsilon_{k}}}.\label{eqn:compact_alg2}
\end{align} Next, we define an auxiliary matrix sequence $\{\underline{\mathbf{y}}_k\}$, employing the true gradient of the smoothed implicit objective  
\begin{align}
 \underline{\mathbf{y}}_{k+1} &:= \mathbf{w}\underline{\mathbf{y}}_k +\nabla_{\mathbf{x}} \mathbf{f}^\eta(\mathbf{x}_{k+1})-\nabla_{\mathbf{x}} \mathbf{f}^\eta(\mathbf{x}_{k}),\label{eqn:y_underbar}
\end{align}
for all $k \geq 0$, where $\underline{\mathbf{y}}_{0}:=\nabla_x \mathbf{f}^\eta(\mathbf{x}_{0})$. Using mathematical induction, it follows that the average of $\underline{\mathbf{y}}_k$ tracks the average of the smoothed local objectives at their local iterates, i.e.,
\begin{align}\underline{\bar{\mathbf{y}}}_k\triangleq \tfrac{1}{m}\mathbf{1}^T \underline{\mathbf{y}}_k = \tfrac{1}{m} \textstyle\sum_{i=1}^m \nabla_\mathbf{x} f_i^\eta(\mathbf{x}_{i,k})  .\label{eqn:y_underbar1}
\end{align}
\begin{lemma}\label{lemma:delta_x_in_terms_of_y}
The following statements hold for all $k \geq 0$. 

\noindent (i) $\bar{\mathbf{x}}_{k+1}-\bar{\mathbf{x}}_{k}= -\gamma \underline{\bar{\mathbf{y}}}_k- \tfrac{\gamma}{m}\mathbf{1}^T(\mathbf{y}_k-\mathbf{1}\underline{\bar{\mathbf{y}}}_k).$

\noindent (ii) $\mathbf{1}^T(\underline{\mathbf{y}}_k-\mathbf{1}\underline{\bar{\mathbf{y}}}_k)=0.$
\end{lemma}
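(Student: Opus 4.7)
My plan is to prove (ii) first since it is essentially a definitional identity, and then use it together with the update rule for $\mathbf{x}_k$ to get (i).

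For (ii), I will expand the left-hand side directly: $\mathbf{1}^T(\underline{\mathbf{y}}_k - \mathbf{1}\underline{\bar{\mathbf{y}}}_k) = \mathbf{1}^T\underline{\mathbf{y}}_k - (\mathbf{1}^T\mathbf{1})\underline{\bar{\mathbf{y}}}_k = \mathbf{1}^T\underline{\mathbf{y}}_k - m\underline{\bar{\mathbf{y}}}_k$. By the definition $\underline{\bar{\mathbf{y}}}_k \triangleq \tfrac{1}{m}\mathbf{1}^T\underline{\mathbf{y}}_k$ stated just before the lemma, the right-hand side vanishes, which settles (ii). (Strictly speaking, to justify the displayed identity \eqref{eqn:y_underbar1} I would also carry out a quick induction on $k$: the base case holds since $\underline{\mathbf{y}}_0 = \nabla\mathbf{x} \mathbf{f}^\eta(\mathbf{x}_0)$, and the inductive step uses $\mathbf{1}^T \mathbf{w}=\mathbf{1}^T$ from Assumption~\ref{mixxx} applied to \eqref{eqn:y_underbar}. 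But the lemma statement (ii) itself, as written, is a direct consequence of the definition of $\underline{\bar{\mathbf{y}}}_k$.)

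For (i), I will left-multiply the update \eqref{eqn:compact_alg1} by $\tfrac{1}{m}\mathbf{1}^T$:
\begin{align*}
\bar{\mathbf{x}}_{k+1} = \tfrac{1}{m}\mathbf{1}^T\mathbf{x}_{k+1} = \tfrac{1}{m}\mathbf{1}^T\mathbf{w}\,\mathbf{x}_k - \tfrac{\gamma}{m}\mathbf{1}^T\mathbf{y}_k.
\end{align*}
Double stochasticity of $\mathbf{w}$ (Assumption~\ref{mixxx}) gives $\mathbf{1}^T\mathbf{w} = \mathbf{1}^T$, hence $\tfrac{1}{m}\mathbf{1}^T\mathbf{w}\,\mathbf{x}_k = \bar{\mathbf{x}}_k$. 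Then I would add and subtract $\mathbf{1}\underline{\bar{\mathbf{y}}}_k$ inside the second term:
\begin{align*}
\tfrac{1}{m}\mathbf{1}^T\mathbf{y}_k = \tfrac{1}{m}\mathbf{1}^T(\mathbf{y}_k - \mathbf{1}\underline{\bar{\mathbf{y}}}_k) + \tfrac{1}{m}(\mathbf{1}^T\mathbf{1})\underline{\bar{\mathbf{y}}}_k = \tfrac{1}{m}\mathbf{1}^T(\mathbf{y}_k - \mathbf{1}\underline{\bar{\mathbf{y}}}_k) + \underline{\bar{\mathbf{y}}}_k,
\end{align*}
since $\mathbf{1}^T\mathbf{1}=m$. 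Substituting back and rearranging yields the claim in (i).

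I do not anticipate any real obstacle here: (ii) is purely definitional, and (i) is a two-line manipulation that combines double stochasticity of $\mathbf{w}$ with the trivial decomposition of $\mathbf{1}^T\mathbf{y}_k$ into its mean component (proportional to $\underline{\bar{\mathbf{y}}}_k$) and its centered residual. The only subtlety worth flagging in the write-up is the convention on row vs.\ column vectors used in \eqref{eqn:notations}, so that the multiplications $\mathbf{1}^T\mathbf{x}_k$ and $\mathbf{1}\underline{\bar{\mathbf{y}}}_k$ produce objects of the correct shape in $\mathbb{R}^{1\times n}$ and $\mathbb{R}^{m\times n}$, respectively.
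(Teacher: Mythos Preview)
Your proposal is correct and follows essentially the same argument as the paper: both proofs left-multiply \eqref{eqn:compact_alg1} by $\tfrac{1}{m}\mathbf{1}^T$, use column-stochasticity of $\mathbf{w}$, and then add and subtract $\mathbf{1}\underline{\bar{\mathbf{y}}}_k$ to obtain (i), while (ii) is recognized as an immediate consequence of the definition $\underline{\bar{\mathbf{y}}}_k = \tfrac{1}{m}\mathbf{1}^T\underline{\mathbf{y}}_k$. The only cosmetic difference is that you prove (ii) before (i), whereas the paper orders them the other way.
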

\begin{proof}
\noindent (i)  Multiplying both sides of \eqref{eqn:compact_alg1} by the averaging operator $\frac{\gamma}{m}\mathbf{1}^T$, and invoking the column-stochasticity of $\mathbf{w}$, 
\begin{align*}
\bar{\mathbf{x}}_{k+1} &= \bar{\mathbf{x}}_k - \tfrac{\gamma}{m}\mathbf{1}^T\mathbf{y}_k =\bar{\mathbf{x}}_k - \tfrac{\gamma}{m}\mathbf{1}^T(\mathbf{y}_k-\mathbf{1}\underline{\bar{\mathbf{y}}}_k+\mathbf{1}\underline{\bar{\mathbf{y}}}_k)\\
&=\bar{\mathbf{x}}_k -\gamma \underline{\bar{\mathbf{y}}}_k- \tfrac{\gamma}{m}\mathbf{1}^T(\mathbf{y}_k-\mathbf{1}\underline{\bar{\mathbf{y}}}_k). 
\end{align*}

\noindent (ii) We have 
\begin{align*}
\mathbf{1}^T(\underline{{\mathbf{y_k}}}-\mathbf{1}\underline{\bar{\mathbf{y}}}_k)&=\mathbf{1}^T(\underline{{\mathbf{y_k}}}-\tfrac{1}{m}\mathbf{1}\mathbf{1}^T\underline{{\mathbf{y}}}_k)=\mathbf{1}^T(\mathbf{I}-\tfrac{1}{m}\mathbf{1}\mathbf{1}^T)\underline{{\mathbf{y}}}_k\\
&=(\mathbf{1}^T-\tfrac{1}{m}\mathbf{1}^T\mathbf{1}\mathbf{1}^T)\underline{{\mathbf{y}}}_k =(\mathbf{1}^T-\mathbf{1}^T)\underline{{\mathbf{y}}}_k=0.
\end{align*}
\end{proof}
The following preliminary result establishes that the exact (possibly unknown) zeroth-order local gradient is an unbiased stochastic gradient of the smoothed local objective and has a bounded second moment. Throughout, we let the history of Alg.~\ref{alg:DZGT} be defined by $\mathcal{F}_k\triangleq \cup_{i=1}^m\cup_{t=0}^{k-1}\{\xi_{i,k}\}$ for $k\geq 1$ \me{and $\mathcal{F}_0\triangleq \cup_{i=1}^m \{\xi_{i,0},x_{i,0}\}$}. 
\begin{lemma}\label{lemma:g_ik_eta_props}
Let $g_{i,k}^{\eta}$ be given by \eqref{eqn:g_eta} and suppose Assumptions~\ref{assum:samples},~\ref{assum:main1}, and~\ref{assum:main2} hold. Then, the following holds for any $i\in [m]$ and all $k\geq 0$ almost surely. (i) $\mathbb{E}\left[\delta_{i,k}^\eta\mid \mathcal{F}_k\right] = 0.$  (ii) $ \mathbb{E}\left[\|g_{i,k}^\eta\|^2\mid \mathcal{F}_k\right] \leq n^2L_0^2.$
 (iii) $\mathbb{E}\left[\|\delta_{i,k}^\eta\|^2\mid \mathcal{F}_k\right] \leq n^2L_0^2.$
\end{lemma}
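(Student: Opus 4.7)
The plan is to exploit the fact that the exact zeroth-order estimator $g_{i,k}^{\eta}$ in \eqref{eqn:g_eta} uses the \emph{true} lower-level solutions $z(x_{i,k})$ and $z(x_{i,k}+v_{i,k})$. Substituting into the oracle \eqref{eqn:ZO_oracle} and recalling $\tilde{f}_i(x,\xi_i)=\tilde h_i(x,z(x),\xi_i)$, the estimator collapses to the standard spherical smoothing gradient of $\tilde f_i(\bullet,\xi_{i,k})$:
\[
g_{i,k}^{\eta}=\tfrac{n}{\eta}\bigl(\tilde f_i(x_{i,k}+v_{i,k},\xi_{i,k})-\tilde f_i(x_{i,k},\xi_{i,k})\bigr)\tfrac{v_{i,k}}{\|v_{i,k}\|}.
\]
This reduction is the only non-mechanical observation; everything else is bookkeeping. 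In particular, since $v_{i,k},\xi_{i,k}$ are drawn i.i.d.\ at iteration $k$ and are independent of $\mathcal{F}_k$ under Assumption~\ref{assum:samples}, while $x_{i,k}$ is $\mathcal{F}_k$-measurable, I can pull conditional expectations through the sampling variables freely.

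For part (i), I will take $\mathbb{E}[\cdot\mid \mathcal{F}_k]$ of the above display. Integrating first in $\xi_{i,k}$ replaces $\tilde f_i$ by $f_i$ (using $f_i(x)=\mathbb{E}_{\xi_i}[\tilde f_i(x,\xi_i)]$), and then integrating in $v_{i,k}\in\eta\mathbb{S}$ gives exactly the expression for $\nabla f_i^{\eta}(x_{i,k})$ from Lemma~\ref{Properties of spherical smoothin}(i). Hence $\mathbb{E}[g_{i,k}^{\eta}\mid \mathcal{F}_k]=\nabla f_i^{\eta}(x_{i,k})$, which yields $\mathbb{E}[\delta_{i,k}^{\eta}\mid \mathcal{F}_k]=0$.

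For part (ii), I will bound $\|g_{i,k}^{\eta}\|$ pathwise. Since $\|v_{i,k}/\|v_{i,k}\|\|=1$ and $\|v_{i,k}\|=\eta$, Assumption~\ref{assum:main1} applied to the Lipschitz continuity of $\tilde h_i(\bullet,z(\bullet),\xi_{i,k})=\tilde f_i(\bullet,\xi_{i,k})$ gives
\[
\|g_{i,k}^{\eta}\|\le \tfrac{n}{\eta}\,L_0(\xi_{i,k})\|v_{i,k}\|=nL_0(\xi_{i,k}).
\]
Squaring and taking $\mathbb{E}[\cdot\mid \mathcal{F}_k]$ (using independence of $\xi_{i,k}$ from $\mathcal{F}_k$) yields $\mathbb{E}[\|g_{i,k}^{\eta}\|^2\mid \mathcal{F}_k]\le n^2\mathbb{E}[L_0^2(\xi_{i,k})]\le n^2L_0^2$ by the definition of $L_0$ in Assumption~\ref{assum:main1}.

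For part (iii), I will use the unbiasedness from (i) to write the standard bias-variance decomposition
\[
\mathbb{E}\bigl[\|\delta_{i,k}^{\eta}\|^2\mid \mathcal{F}_k\bigr]=\mathbb{E}\bigl[\|g_{i,k}^{\eta}\|^2\mid \mathcal{F}_k\bigr]-\|\nabla f_i^{\eta}(x_{i,k})\|^2,
\]
drop the nonnegative subtracted term, and invoke the bound from (ii). There is no real obstacle here; if anything, the main point requiring care is to keep track that $g_{i,k}^{\eta}$ in \eqref{eqn:g_eta} (as opposed to $g_{i,k}^{\eta,\varepsilon_k}$) uses exact lower-level evaluations, so no inexactness error enters and the estimator is genuinely a spherical-smoothing gradient of $\tilde f_i(\bullet,\xi_{i,k})$.
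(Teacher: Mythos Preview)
Your proposal is correct and follows essentially the same approach as the paper: part (i) integrates over the fresh samples $(\xi_{i,k},v_{i,k})$ to recover the smoothed gradient via Lemma~\ref{Properties of spherical smoothin}(i), part (ii) applies the Lipschitz bound from Assumption~\ref{assum:main1} together with $\|v_{i,k}\|=\eta$, and part (iii) uses the bias-variance identity enabled by (i) and then drops the nonnegative term before invoking (ii).
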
 
\begin{proof}
\noindent (i) This follows from the definition of $\delta_{i,k}$ and 
\begin{align*}
 &\mathbb{E}\left[g_{i,k}^\eta\mid \mathcal{F}_k\right] = \mathbb{E}\left[ \tfrac{n(\tilde f_i(x_{i,k}+v_{i,k},\xi_{i,k})-\tilde f_i(x_{i,k},\xi_{i,k}))v_{i,k}}{\eta\|v_{i,k}\|} \mid \mathcal{F}_k\right]\\
& {=}  \mathbb{E}\left[\mathbb{E}\left[\tfrac{n(\tilde f_i(x_{i,k}+v_{i,k},\xi_{i,k})-\tilde f_i(x_{i,k},\xi_{i,k}))v_{i,k}}{\eta\|v_{i,k}\|}\mid \mathcal{F}_k\cup\{v_{i,k}\}\right]\right]\\
&{=} \left(\tfrac{n}{\eta}\right)\mathbb{E}\left[\frac{\left( f_i(x_{i,k}+v_{i,k},\xi_{i,k})- f_i(x_{i,k},\xi_{i,k})\right)v_{i,k}}{\|v_{i,k}\|}\mid \mathcal{F}_k\right]\\
& \overset{\mathbb{E}[v_{i,k}\mid \mathcal{F}_k]=0}{=} \left(\tfrac{n}{\eta}\right)\mathbb{E}_{v_{i,k}}\left[  f_i(x_{i,k}+v_{i,k},\xi_{i,k})\tfrac{v_{i,k}}{\|v_{i,k}\|}\mid \mathcal{F}_k\right]\\
& {=}\nabla f_i^\eta(x_{i,k}).
\end{align*}

 \fy{\noindent  (ii) We have}
\begin{align*}
  &\mathbb{E}\left[\|g_{i,k}^\eta\|^2\mid \mathcal{F}_k\right] \\ &= \mathbb{E}\left[ \tfrac{n^2|\tilde f_i(x_{i,k}+v_{i,k},\xi_{i,k})-\tilde f_i(x_{i,k},\xi_{i,k})|^2\|v_{i,k}\|^2}{\eta^2\|v_{i,k}\|^2} \mid \mathcal{F}_k\right]\\
& {=}  \tfrac{n^2}{\eta^2}  \mathbb{E}\left[ |\tilde f_i(x_{i,k}+v_{i,k},\xi_{i,k})-\tilde f_i(x_{i,k},\xi_{i,k})|^2\mid \mathcal{F}_k\right]\\
&  \overset{{\tiny \hbox{Assump.~}}\ref{assum:main1}}{\leq}  \tfrac{n^2}{\eta^2}  \mathbb{E}\left[ L_0^2(\xi_{i,k})\|v_{i,k}\|^2\mid \mathcal{F}_k\right]  \overset{\|v_{i,k}\|=\eta}{=}  n^2L_0^2.
\end{align*}

\noindent (iii) This result follows from the following relation.
\begin{align*}
 &\mathbb{E}\left[\|\delta_{i,k}^{\eta} \|^2\mid \mathcal{F}_k\right]  = \mathbb{E}\left[\|g_{i,k}^\eta- \nabla f_i^\eta(x_k)\|^2\mid \mathcal{F}_k\right] \\
 &= \mathbb{E}\left[\|g_{i,k}^\eta\|^2 +\|\nabla f_i^\eta(x_k)\|^2 -2 {g_{i,k}^\eta}^T\nabla f_i^\eta(x_k)\mid \mathcal{F}_k\right] \\
& \overset{(i)}{=} \mathbb{E}\left[\|g_{i,k}^\eta\|^2\mid \mathcal{F}_k\right]-\|\nabla f_i^\eta(x_k)\|^2\\
& \leq \mathbb{E}\left[\|g_{i,k}^\eta\|^2\mid \mathcal{F}_k\right]  \overset{(ii)}{\leq} n^2L_0^2.
\end{align*}
\end{proof}

\begin{lemma}\label{lemm:omega_vareps} 
Let Assumptions~\ref{assum:samples},~\ref{assum:main1}, and~\ref{assum:main2} hold and $\mathbb{E}[\|z_{\varepsilon_k}(\bullet) -z(\bullet)\|^2 \mid \mathcal{F}_k]\leq \varepsilon_k$ hold for all $k \geq 0$ almost surely. Then, the following holds for all $i\in [m]$ and $k\geq 0$. 

\noindent (i) $\mathbb{E}[\|\omega_{i,k}^{\eta,\varepsilon_k}\|^2 |\mathcal{F}_k]\le \left(\tfrac{4\tilde L_0^2n^2\varepsilon_k}{\eta^2}\right).$

\noindent (ii)  Suppose that $\{\varepsilon_k\}$ is nonincreasing. Then, \me{for any $k\ge 0$,}
$$\mathbb{E}[\|\mathbf{y}_k-\underline {\mathbf {y}}_k\|^2 ] \le 2m(n^2L_0^2+ \tfrac{4\tilde L_0^2n^2\varepsilon_0}{\eta^2})\left(1 + \tfrac{8(1+\me{\rho^2})}{ (1-\me{\rho^2})^2}\right) .$$  
\end{lemma}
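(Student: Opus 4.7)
I would start from the definition $\omega_{i,k}^{\eta,\varepsilon_k} = g_{i,k}^{\eta,\varepsilon_k} - g_{i,k}^\eta$ and the oracle formula~\eqref{eqn:ZO_oracle}. Since both estimators share the same directional unit vector $v_{i,k}/\|v_{i,k}\|$ and sample $\xi_{i,k}$, the difference collapses to $\omega_{i,k}^{\eta,\varepsilon_k} = \tfrac{n}{\eta}\cdot\tfrac{v_{i,k}}{\|v_{i,k}\|}(A_{i,k} - B_{i,k})$, where $A_{i,k}$ and $B_{i,k}$ are the respective gaps $\tilde h_i(\bullet, z_{\varepsilon_k}(\bullet), \xi_{i,k}) - \tilde h_i(\bullet, z(\bullet), \xi_{i,k})$ evaluated at $x_{i,k}+v_{i,k}$ and $x_{i,k}$. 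Squaring, applying $(a-b)^2 \leq 2a^2 + 2b^2$, and invoking the Lipschitz continuity of $\tilde h_i(x, \cdot, \xi_{i,k})$ from Assumption~\ref{assum:main1} with constant $\tilde L_0(\xi_{i,k})$ give
$$\|\omega_{i,k}^{\eta,\varepsilon_k}\|^2 \leq \tfrac{2n^2 \tilde L_0^2(\xi_{i,k})}{\eta^2}\Bigl(\|z_{\varepsilon_k}(x_{i,k}+v_{i,k}) - z(x_{i,k}+v_{i,k})\|^2 + \|z_{\varepsilon_k}(x_{i,k}) - z(x_{i,k})\|^2\Bigr).$$
Taking conditional expectation given $\mathcal{F}_k$, using the independence between $\xi_{i,k}$ and the inner SA samples driving Algorithm~\ref{alg:lowerlevel} to factor the expectation, and applying the hypothesis $\mathbb{E}[\|z_{\varepsilon_k}(\bullet) - z(\bullet)\|^2\mid\mathcal{F}_k] \leq \varepsilon_k$ together with $\mathbb{E}[\tilde L_0^2(\xi_{i,k})] \leq \tilde L_0^2$ delivers the stated bound.

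\textbf{Plan for (ii).} Subtracting~\eqref{eqn:y_underbar} from~\eqref{eqn:compact_alg2}, the smoothed-gradient increments cancel and leave the tracking recursion
$$\mathbf{E}_{k+1} = \mathbf{w}\mathbf{E}_k + \boldsymbol{\Delta}_{k+1} - \boldsymbol{\Delta}_k, \qquad \mathbf{E}_0 = \boldsymbol{\Delta}_0,$$
where $\mathbf{E}_k \triangleq \mathbf{y}_k - \underline{\mathbf{y}}_k$ and $\boldsymbol{\Delta}_k \triangleq \boldsymbol{\delta}_k^\eta + \boldsymbol{\omega}_k^{\eta,\varepsilon_k}$. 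I would then decompose $\mathbf{E}_k = \bar{\mathbf{E}}_k + \mathbf{E}_k^\perp$ with $\bar{\mathbf{E}}_k \triangleq \tfrac{1}{m}\mathbf{1}\mathbf{1}^T \mathbf{E}_k$. Because $\mathbf{1}^T\mathbf{w} = \mathbf{1}^T$, a short induction on the recursion shows $\mathbf{1}^T \mathbf{E}_k = \mathbf{1}^T \boldsymbol{\Delta}_k$ for every $k$, and Jensen's inequality applied row-wise yields $\|\bar{\mathbf{E}}_k\|^2 \leq \|\boldsymbol{\Delta}_k\|^2$.

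Next, applying the projection $\mathbf{P} \triangleq \mathbf{I} - \tfrac{1}{m}\mathbf{1}\mathbf{1}^T$ to the recursion and using the identity $\mathbf{P}\mathbf{w} = \tilde{\mathbf{w}} \triangleq \mathbf{w} - \tfrac{1}{m}\mathbf{1}\mathbf{1}^T$, together with $\|\tilde{\mathbf{w}}\| = \underline{\lambda}_{max}$ from Assumption~\ref{mixxx}, produces the perpendicular dynamics $\mathbf{E}_{k+1}^\perp = \tilde{\mathbf{w}}\mathbf{E}_k^\perp + \mathbf{P}(\boldsymbol{\Delta}_{k+1} - \boldsymbol{\Delta}_k)$. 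Young's inequality with parameter $\alpha = (1-\underline{\lambda}_{max}^2)/(2\underline{\lambda}_{max}^2)$, combined with $\|\mathbf{P}\| = 1$ and $\|a-b\|^2 \leq 2\|a\|^2 + 2\|b\|^2$, then yields the linear recursion
$$\|\mathbf{E}_{k+1}^\perp\|^2 \leq \tfrac{1+\underline{\lambda}_{max}^2}{2}\|\mathbf{E}_k^\perp\|^2 + \tfrac{2(1+\underline{\lambda}_{max}^2)}{1-\underline{\lambda}_{max}^2}\bigl(\|\boldsymbol{\Delta}_{k+1}\|^2 + \|\boldsymbol{\Delta}_k\|^2\bigr).$$
Combining $\|\boldsymbol{\Delta}_k\|^2 \leq 2\|\boldsymbol{\delta}_k^\eta\|^2 + 2\|\boldsymbol{\omega}_k^{\eta,\varepsilon_k}\|^2$ with Lemma~\ref{lemma:g_ik_eta_props}(iii), part~(i), and the monotonicity of $\{\varepsilon_k\}$ gives the uniform bound $\mathbb{E}[\|\boldsymbol{\Delta}_k\|^2] \leq 2m(n^2 L_0^2 + \tfrac{4\tilde L_0^2 n^2 \varepsilon_0}{\eta^2})$. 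Taking expectations and iterating the linear recursion produces a geometric series with ratio $\tfrac{1+\underline{\lambda}_{max}^2}{2}$, whose sum is $\tfrac{2}{1-\underline{\lambda}_{max}^2}$; multiplied by the perturbation coefficient this yields the amplification factor $\tfrac{8(1+\underline{\lambda}_{max}^2)}{(1-\underline{\lambda}_{max}^2)^2}$. Adding the consensus-part bound $\mathbb{E}[\|\bar{\mathbf{E}}_k\|^2] \leq \mathbb{E}[\|\boldsymbol{\Delta}_k\|^2]$ gives the claim.

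\textbf{Main obstacle.} The most delicate step is tuning the Young's-inequality parameter so that the geometric factor and the perturbation coefficient balance precisely to the target form $1 + \tfrac{8(1+\underline{\lambda}_{max}^2)}{(1-\underline{\lambda}_{max}^2)^2}$, rather than producing a looser estimate; the bookkeeping across the consensus and orthogonal components also requires care so that the initial-condition term does not inflate the final constant. A secondary subtlety in part~(i) is factoring the joint conditional expectation $\mathbb{E}[\tilde L_0^2(\xi_{i,k})\|z_{\varepsilon_k}(\bullet)-z(\bullet)\|^2\mid\mathcal{F}_k]$, which requires the outer sample $\xi_{i,k}$ to be independent of the inner SA samples generated inside Algorithm~\ref{alg:lowerlevel}.
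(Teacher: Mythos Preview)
Your argument for part~(i) is essentially identical to the paper's: both expand the oracle difference, apply the Lipschitz property of $\tilde h_i(x,\cdot,\xi)$ in the second argument, square, and finish by factoring the conditional expectation using independence of $\xi_{i,k}$ from the inner samples.

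For part~(ii) you take a genuinely different route. The paper does \emph{not} decompose $\mathbf{E}_k\triangleq\mathbf{y}_k-\underline{\mathbf{y}}_k$ into consensus and orthogonal components. Instead it applies Young's inequality directly to the recursion $\mathbf{E}_{k+1}=\mathbf{w}\mathbf{E}_k+(\boldsymbol{\Delta}_{k+1}-\boldsymbol{\Delta}_k)$, bounds $\|\mathbf{w}\mathbf{E}_k\|^2\le\underline{\lambda}_{max}^2\|\mathbf{E}_k\|^2$ in one stroke, and unrolls the resulting scalar inequality in $\mathbb{E}[\|\mathbf{E}_k\|^2]$; the additive ``$1$'' in the final bracket then arises solely from the initial term $\mathbb{E}[\|\mathbf{E}_0\|^2]=\mathbb{E}[\|\boldsymbol{\Delta}_0\|^2]$. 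Your projection-based argument is more scrupulous about where the spectral contraction is actually valid (only on the subspace orthogonal to $\mathbf{1}$, whereas $\mathbf{1}^T\mathbf{E}_k=\mathbf{1}^T\boldsymbol{\Delta}_k$ is not zero in general), so in that sense your route is the more defensible one. The trade-off is in the constant: following your steps, the consensus bound $\mathbb{E}[\|\bar{\mathbf{E}}_k\|^2]\le D$ and the initial orthogonal residual $\hat\rho^{\,k}\mathbb{E}[\|\mathbf{E}_0^\perp\|^2]\le D$ each contribute a full $D\triangleq 2m(n^2L_0^2+\tfrac{4\tilde L_0^2n^2\varepsilon_0}{\eta^2})$, so you arrive at $D\bigl(2+\tfrac{8(1+\underline{\lambda}_{max}^2)}{(1-\underline{\lambda}_{max}^2)^2}\bigr)$ rather than the stated $D\bigl(1+\tfrac{8(1+\underline{\lambda}_{max}^2)}{(1-\underline{\lambda}_{max}^2)^2}\bigr)$. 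You correctly anticipated this bookkeeping hazard in your ``Main obstacle'' paragraph; to recover the paper's constant exactly you would need either to adopt its direct contraction on the whole of $\mathbf{E}_k$ or to accept a harmless factor-of-two inflation, which is immaterial for all downstream uses of the lemma.
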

\begin{proof} (i) Using the definition of $\omega_{i,k}^{\eta,\varepsilon_k} $, we have
\begin{align*}
\|\omega_{i,k}^{\eta,\varepsilon_k}\| &= \|g_{i,k}^{\eta,\varepsilon_k}-g_{i,k}^{\eta}\|\\
& = \|\hat{g}_i(x_{i,{k}},v_{i,{k}},z_{\varepsilon_{k}}({x}_{i,{k}}),z_{\varepsilon_{k}}({x}_{i,{k}}+v_{i,{k}}),\xi_{i,{k}})\\
&-\hat{g}_i(x_{i,k},v_{i,k},z(x_{i,k}),z(x_{i,k}+v_{i,k}),\xi_{i,k})\|\\
& \leq (\tfrac{n}{\eta})|\tilde h_i(x_{i,k}+v_{i,k},z_{{\varepsilon}_k}({x}_{i,k}+v_{i,k}),\xi_{i,k})\\
&-\tilde h_i(x_{i,k}+v_{i,k},z({x}_{i,k}+v_{i,k}),\xi_{i,k})| \\
&+(\tfrac{n}{\eta})|\tilde h_i(x_{i,k},z_{\varepsilon_k}({x}_{i,k}),\xi_{i,k})\\
&-\tilde h_i(x_{i,k},z({x}_{i,k}),\xi_{i,k})|.
\end{align*}
Invoking the Lipschitz continuity of $\tilde h_i(x,\bullet,\xi_i)$,  we obtain 
\begin{align*}
\|\omega_{i,k}^{\eta,\varepsilon_k}\|  &\le  (\tfrac{n}{\eta} )\tilde{L}_0(\xi_{i,k})\left\|z_{{\varepsilon_k}}({x}_{i,k}+v_{i,k})-z({x}_{i,k}+v_{i,k})\right\|\\
&+  (\tfrac{n}{\eta} )\tilde{L}_0(\xi_{i,k})\left\|z_{{\varepsilon_k}}({x}_{i,k})-z({x}_{i,k})\right\|.
\end{align*}
From the preceding inequality,  we obtain
\begin{align*}
&\mathbb{E}[\|\omega_{i,k}^{\eta,\varepsilon_k}\| ^2 \mid\mathcal{F}_k] \le 2(\tfrac{n}{\eta} )^2\mathbb{E}[\tilde{L}_0(\xi_{i,k})^2\left\|z_{{\varepsilon_k}}({x}_{i,k}+v_{i,k})\right.\\
&\left.-z({x}_{i,k}+v_{i,k})\right\|^2\mid \mathcal{F}_k]\\
&+ 2(\tfrac{n}{\eta} )^2\mathbb{E}[\tilde{L}_0(\xi_{i,k})^2\left\|z_{{\varepsilon_k}}({x}_{i,k})-z({x}_{i,k})\right\|^2 \mid\mathcal{F}_k].
\end{align*}
Invoking the independence of $\xi_{i,k}$ and $v_{i,k}$, the definition of $\tilde{L_0}$, and the inexactness bound, we obtain the result. 

\noindent (ii) From equations \eqref{eqn:compact_alg2} and \eqref{eqn:y_underbar}, for any $\theta>0$ we have 
\begin{align*}
&\|\mathbf{y}_{k+1}-\underline {\mathbf {y}}_{k+1}\|^2 \\
& = \|\mathbf{w}(\mathbf{y}_{k}-\underline {\mathbf {y}}_{k})+\boldsymbol{\delta}_{k+1}^{\eta}-\boldsymbol{\delta}_{k}^{\eta}+\boldsymbol{\omega}_{k+1}^{\eta,{\varepsilon_{k+1}}}-\boldsymbol{\omega}_{k}^{\eta,{\varepsilon_{k}}}\|^2\\
& \leq (1+\theta)\|\mathbf{w}(\mathbf{y}_{k}-\underline {\mathbf {y}}_{k})\|^2 \\
&+ (1+\tfrac{1}{\theta})\|\boldsymbol{\delta}_{k+1}^{\eta}-\boldsymbol{\delta}_{k}^{\eta} +\boldsymbol{\omega}_{k+1}^{\eta,{\varepsilon_{k+1}}}-\boldsymbol{\omega}_{k}^{\eta,{\varepsilon_{k}}}\|^2.
\end{align*}
Taking conditional expectations on both sides, we obtain 
\begin{align*}
&\mathbb{E}[\|\mathbf{y}_{k+1}-\underline {\mathbf {y}}_{k+1}\|^2\mid \mathcal{F}_k]  \leq (1+\theta)\me{\rho^2}\|\mathbf{y}_{k}-\underline {\mathbf {y}}_{k}\|^2 \\
&+ 4(1+\tfrac{1}{\theta})\mathbb{E}[\|\boldsymbol{\delta}_{k+1}^{\eta}\|^2\mid \mathcal{F}_k] +4(1+\tfrac{1}{\theta})\mathbb{E}[\|\boldsymbol{\delta}_{k}^{\eta}\|^2\mid \mathcal{F}_k] \\
& +4(1+\tfrac{1}{\theta})\mathbb{E}[\|\boldsymbol{\omega}_{k+1}^{\eta,{\varepsilon_{k+1}}}\|^2\mid \mathcal{F}_k] +4(1+\tfrac{1}{\theta})\mathbb{E}[\|\boldsymbol{\omega}_{k}^{\eta,{\varepsilon_{k}}}\|^2\mid \mathcal{F}_k] .
\end{align*}
Invoking the law of total expectations, we have 
\begin{align*}
\mathbb{E}[\|\boldsymbol{\delta}_{k+1}^{\eta}\|^2\mid \mathcal{F}_k] 
=\mathbb{E}[\mathbb{E}[\|\boldsymbol{\delta}_{k+1}^{\eta}\|^2\mid \mathcal{F}_{k+1}]]. 
\end{align*}
Thus, applying Lemmas~\ref{lemma:g_ik_eta_props} and~\ref{lemm:omega_vareps}, we obtain 
\begin{align*}
\mathbb{E}[\|\mathbf{y}_{k+1}-\underline {\mathbf {y}}_{k+1}\|^2\mid \mathcal{F}_k]  &\leq (1+\theta)\me{\rho^2}\|\mathbf{y}_{k}-\underline {\mathbf {y}}_{k}\|^2 \\
&+  8(1+\tfrac{1}{\theta})m \left(n^2L_0^2+ \tfrac{4\tilde L_0^2n^2\varepsilon_k}{\eta^2} \right) .
\end{align*}
Let us set $\theta:=\frac{1-\me{\rho^2}}{2\me{\rho^2}}$. Thus, $(1+\theta)\me{\rho^2} =\frac{1+\me{\rho^2}}{2}<1$. Let us define the following terms $\hat{\rho}:=(1+\theta)\me{\rho^2}$ and  $\hat{b}:=8(1+\tfrac{1}{\theta})m \left(n^2L_0^2+ \tfrac{4\tilde L_0^2n^2\varepsilon_0}{\eta^2} \right)$. Taking expectations from the preceding inequality, we obtain for $k\geq 0$
\begin{align*}
\mathbb{E}[\|\mathbf{y}_{k+1}-\underline {\mathbf {y}}_{k+1}\|^2]  &\leq \hat{\rho}\, \mathbb{E}[\|\mathbf{y}_{k}-\underline {\mathbf {y}}_{k}\|^2] + \hat{b},
\end{align*}
where we use the nonincreasing assumption on $\{\varepsilon_k\}$. Unrolling the preceding relation recursively yields, for $K\geq 1$,
\begin{align*}
\mathbb{E}[\|\mathbf{y}_{K}-\underline {\mathbf {y}}_{K}\|^2]  &\leq \hat{\rho}^K \, \mathbb{E}[\|\mathbf{y}_{0}-\underline {\mathbf {y}}_{0}\|^2] + \hat{b}\sum_{k=0}^{K-1}\hat{\rho}^k \\
&\leq \mathbb{E}[\|\mathbf{y}_{0}-\underline {\mathbf {y}}_{0}\|^2] + \tfrac{\hat{b}}{1-\hat{\rho}}.
\end{align*}
Note that from the definitions of $\mathbf{y}_{k}$ and $\underline {\mathbf {y}}_{k}$, we have 
\begin{align*}
\mathbb{E}[\|\mathbf{y}_{0}-\underline {\mathbf {y}}_{0}\|^2] = \mathbb{E}[\|\boldsymbol{\delta}_{0}+\boldsymbol{\omega}_{0}\|^2] \leq 2m(n^2L_0^2+ \tfrac{4\tilde L_0^2n^2\varepsilon_0}{\eta^2}),
\end{align*}
\fy{where we used} Lemmas~\ref{lemma:g_ik_eta_props} and~\ref{lemm:omega_vareps}. From the last two relations, we \fy{obtain}
\begin{align*}
\mathbb{E}[\|\mathbf{y}_{K}-\underline {\mathbf {y}}_{K}\|^2]  &\leq 2m(n^2L_0^2+ \tfrac{4\tilde L_0^2n^2\varepsilon_0}{\eta^2}) + \tfrac{\hat{b}}{1-\hat{\rho}} \\
& \leq 2m(n^2L_0^2+ \tfrac{4\tilde L_0^2n^2\varepsilon_0}{\eta^2}) + \tfrac{2\hat{b}}{ 1-\underline{\lambda}_{max}^2} \\
&\leq  2m(n^2L_0^2+ \tfrac{4\tilde L_0^2n^2\varepsilon_0}{\eta^2})\left(1 + \tfrac{8(1+\me{\rho^2})}{ (1-\me{\rho^2})^2}\right) .
\end{align*}
\end{proof} 
\begin{remark}
Notably, the bound in Lemma \ref{lemm:omega_vareps} (ii) holds uniformly invariant of $k$ implying that the error due to the inexact calls to the lower level oracle does not lead to the divergence of $\mathbb{E}[\|\mathbf{y}_k-\underline {\mathbf {y}}_k\|^2 ] $ as $k$ grows. Also, the bound in Lemma \ref{lemm:omega_vareps} (ii) goes to zero as the inexactness level reduces.
\end{remark}
\fy{\begin{lemma}\label{lem:bound_bar_y_bar_squared} Let $\{\mathbf{x}_k\}$ be generated by Algorithm $\ref{alg:DZGT}$. \fy{We have, for $k\geq 0,$} 
\begin{align}\label{main-parameter}
\mathbb{E}[\| \underline{\bar{\mathbf{y}}}_k\|^2|\mathcal{F}_k]&\le\tfrac{2L_0n}{m\eta}\mathbb{E}\left[\left\| \mathbf{x}_{k}-\mathbf{1}\bar{{\mathbf x}}_k\right\|^2|\mathcal{F}_k\right]\notag\\
&+2\mathbb{E}[\|\nabla f^\eta(\bar{\mathbf{x}}_k)\|^2|\mathcal{F}_k]\\
\hbox{\fy{and} } \me{\mathbb{E}[\| \underline{\bar{\mathbf{y}}}_k\|^2|\mathcal{F}_k]}&\me{\ge\tfrac{1}{2}\mathbb{E}[\|\nabla f^\eta(\bar{\mathbf{x}}_k)\|^2|\mathcal{F}_k]}\notag\\
 &\me{-\tfrac{L_0n}{m\eta}\mathbb{E}\left[\left\| \mathbf{x}_{k}-\mathbf{1}\bar{{\mathbf x}}_k\right\|^2|\mathcal{F}_k\right].}
\end{align}
\end{lemma}}
\begin{proof} \fy{To show the first inequality, we} may write
\begin{align*}
&\mathbb{E}[\| \underline{\bar{\mathbf{y}}}_k\|^2|\mathcal{F}_k]=\mathbb{E}[\| \underline{\bar{\mathbf{y}}}_k-\nabla f^\eta(\bar{\mathbf{x}}_k)+\nabla f^\eta(\bar{\mathbf{x}}_k)\|^2|\mathcal{F}_k]\\
&{\le} 2\mathbb{E}[\| \underline{\bar{\mathbf{y}}}_k-\nabla f^\eta(\bar{\mathbf{x}}_k)\|^2|\mathcal{F}_k]+2\mathbb{E}[\|\nabla f^\eta(\bar{\mathbf{x}}_k)\|^2|\mathcal{F}_k]\\
&{=} 2\mathbb{E}\left[\left\| \tfrac{1}{m} \textstyle \sum_{i=1}^m \nabla_\mathbf{x} f_i^\eta({x}_{i,k})-\tfrac{1}{m}\textstyle \sum_{i=1}^m \nabla_\mathbf{x} f_i^\eta(\bar{{x}}_k) \right\|^2|\mathcal{F}_k\right]\\
&+2\mathbb{E}[\|\nabla f^\eta(\bar{\mathbf{x}}_k)\|^2|\mathcal{F}_k]\\
 &\le  \tfrac{2}{m} \textstyle \sum_{i=1}^m\mathbb{E}\left[\left\| \nabla_\mathbf{x} f_i^\eta({x}_{i,k})- \nabla_\mathbf{x} f_i^\eta(\bar{{x}}_k)\right\|^2|\mathcal{F}_k\right]\\
&+2\mathbb{E}[\|\nabla f^\eta(\bar{\mathbf{x}}_k)\|^2|\mathcal{F}_k]\\
&\overset{\scriptsize{\mbox{Lemma }} \ref{Properties of spherical smoothin}}{\le}
\tfrac{2L_0n}{m\eta}\mathbb{E}\left[\left\| \mathbf{x}_{k}-\mathbf{1}\bar{{\mathbf x}}_k\right\|^2|\mathcal{F}_k\right]+2\mathbb{E}[\|\nabla f^\eta(\bar{\mathbf{x}}_k)\|^2|\mathcal{F}_k].
\end{align*}
 \fy{To show the second inequality, we may} \me{write
\begin{align*}
&\mathbb{E}[\|\nabla f^\eta(\bar{\mathbf{x}}_k)\|^2|\mathcal{F}_k]]=\mathbb{E}[\| \nabla f^\eta(\bar{\mathbf{x}}_k)+\underline{\bar{\mathbf{y}}}_k-\underline{\bar{\mathbf{y}}}_k\|^2|\mathcal{F}_k]\\
&{\le} 2\mathbb{E}[\| \underline{\bar{\mathbf{y}}}_k-\nabla f^\eta(\bar{\mathbf{x}}_k)\|^2|\mathcal{F}_k]+2\mathbb{E}[\|\underline{\bar{\mathbf{y}}}_k\|^2|\mathcal{F}_k]\\
&{=} 2\mathbb{E}\left[\left\| \tfrac{1}{m} \textstyle \sum_{i=1}^m \nabla_\mathbf{x} f_i^\eta({x}_{i,k})-\tfrac{1}{m}\textstyle \sum_{i=1}^m \nabla_\mathbf{x} f_i^\eta(\bar{{x}}_k) \right\|^2|\mathcal{F}_k\right]\\
&+2\mathbb{E}[\|\underline{\bar{\mathbf{y}}}_k\|^2|\mathcal{F}_k]\\
 &\le  \tfrac{2}{m} \textstyle \sum_{i=1}^m\mathbb{E}\left[\left\| \nabla_\mathbf{x} f_i^\eta({x}_{i,k})- \nabla_\mathbf{x} f_i^\eta(\bar{{x}}_k)\right\|^2|\mathcal{F}_k\right]\\
 &+2\mathbb{E}[\|\underline{\bar{\mathbf{y}}}_k\|^2|\mathcal{F}_k]\\
&\overset{\scriptsize{\mbox{Lemma }} \ref{Properties of spherical smoothin}}{\le}
\tfrac{2L_0n}{m\eta}\mathbb{E}\left[\left\| \mathbf{x}_{k}-\mathbf{1}\bar{{\mathbf x}}_k\right\|^2|\mathcal{F}_k\right]+2\mathbb{E}[\|\underline{\bar{\mathbf{y}}}_k\|^2|\mathcal{F}_k].
\end{align*}
Therefore, \fy{we obtain}
\begin{align*}
\mathbb{E}[\| \underline{\bar{\mathbf{y}}}_k\|^2|\mathcal{F}_k]&\ge\tfrac{1}{2}\mathbb{E}[\|\nabla f^\eta(\bar{\mathbf{x}}_k)\|^2|\mathcal{F}_k]\\
&-\tfrac{L_0n}{m\eta}\mathbb{E}\left[\left\| \mathbf{x}_{k}-\mathbf{1}\bar{{\mathbf x}}_k\right\|^2|\mathcal{F}_k\right].
\end{align*}}
\end{proof} 
\begin{lemma}\label{Descent Lemma} Consider Algorithm \ref{alg:DZGT}. Let $\gamma \le \left(1-\tfrac{3 \beta}{2}\right)\tfrac{\eta}{2L_0n}$ where $\beta \in (0,\tfrac{2}{3})$. Under Assumptions~\ref{mixxx}--\ref{assum:main2}, for all $k\geq 0$, 
\begin{align*}
&\mathbb{E}[f^\eta(\bar{\mathbf{x}}_{k+1})]\le\mathbb{E}[ f^\eta(\bar{\mathbf{x}}_{k})]+\tfrac{2L_0n\gamma^2}{\eta}\mathbb{E}[\| \underline{{\mathbf{y}}}_k-\mathbf{1}\underline{\bar{\mathbf{y}}}_k\|^2]\\
&{+}\me{\left(\tfrac{\gamma}{2\beta}\tfrac{L_0^2n^2}{m\eta^2}-\tfrac{L_0n}{m\eta} \left(-\gamma + \tfrac{\gamma \beta}{2}+\tfrac{L_0n\gamma^2}{\eta}\right)\right)}\mathbb{E}\left[\left\|\mathbf{x}_k-\mathbf{1}\bar{\mathbf{x}_k}\right\|^2\right]\\
&+\tfrac{2L_0n\gamma^2}{\eta}  2m(n^2L_0^2+ \tfrac{4\tilde L_0^2n^2\varepsilon_0}{\eta^2})\left(1 + \tfrac{8(1+\me{\rho^2})}{ (1-\me{\rho^2})^2}\right)\\
&+\me{\left(\tfrac{-\gamma}{2} + \tfrac{3\gamma \beta}{4}+\tfrac{L_0n\gamma^2}{2\eta}\right)}\mathbb{E}\left[\|\nabla f^\eta(\bar{\mathbf{x}}_k)\|^2\right] +\tfrac{16\tilde L_0^2n^2\varepsilon_k}{\beta \eta^2}.
\end{align*}
\end{lemma}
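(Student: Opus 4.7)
The strategy is a standard Lipschitz-gradient descent-lemma argument applied to the averaged iterate $\bar{\mathbf{x}}_k$ on the smoothed global objective $f^\eta$, stitched together using Lemmas \ref{Properties of spherical smoothin}--\ref{lem:bound_bar_y_bar_squared}. Since each $f_i$ inherits $L_0$-Lipschitz continuity from Assumption \ref{assum:main1}, so does $f=\tfrac{1}{m}\sum_i f_i$, and hence $\nabla f^\eta$ is Lipschitz with constant $L_0 n/\eta$ by Lemma \ref{Properties of spherical smoothin}(ii). The starting point is
\[ f^\eta(\bar{\mathbf{x}}_{k+1}) \leq f^\eta(\bar{\mathbf{x}}_k) + \nabla f^\eta(\bar{\mathbf{x}}_k)^T(\bar{\mathbf{x}}_{k+1} - \bar{\mathbf{x}}_k) + \tfrac{L_0 n}{2\eta}\|\bar{\mathbf{x}}_{k+1} - \bar{\mathbf{x}}_k\|^2, \]
into which I substitute Lemma \ref{lemma:delta_x_in_terms_of_y}(i), rewriting $\bar{\mathbf{x}}_{k+1} - \bar{\mathbf{x}}_k = -\gamma\underline{\bar{\mathbf{y}}}_k - \tfrac{\gamma}{m}\mathbf{1}^T(\mathbf{y}_k - \mathbf{1}\underline{\bar{\mathbf{y}}}_k)$, and treat the linear and quadratic pieces separately.

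For the linear piece, the decomposition splits it into $-\gamma\nabla f^\eta(\bar{\mathbf{x}}_k)^T\underline{\bar{\mathbf{y}}}_k$ and $-\tfrac{\gamma}{m}\nabla f^\eta(\bar{\mathbf{x}}_k)^T\mathbf{1}^T(\mathbf{y}_k - \mathbf{1}\underline{\bar{\mathbf{y}}}_k)$. On the first, I add and subtract $\nabla f^\eta(\bar{\mathbf{x}}_k)$ inside the inner product to isolate a clean negative quadratic, then apply Young's inequality with parameter $\beta$ to the remaining cross term; the residual $\|\nabla f^\eta(\bar{\mathbf{x}}_k) - \underline{\bar{\mathbf{y}}}_k\|^2$ is bounded via Jensen plus Lemma \ref{Properties of spherical smoothin}(ii) by $\tfrac{L_0^2 n^2}{m\eta^2}\|\mathbf{x}_k - \mathbf{1}\bar{\mathbf{x}}_k\|^2$, generating the $\tfrac{\gamma L_0^2 n^2}{2\beta m\eta^2}$ contribution to the consensus-error coefficient. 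On the second (tracker-error) piece, I unroll \eqref{eqn:compact_alg2} and \eqref{eqn:y_underbar} to recognize $\bar{\mathbf{y}}_k - \underline{\bar{\mathbf{y}}}_k = \bar{\boldsymbol{\delta}}_k + \bar{\boldsymbol{\omega}}_k$; conditioning on $\mathcal{F}_k$ annihilates $\bar{\boldsymbol{\delta}}_k$ by Lemma \ref{lemma:g_ik_eta_props}(i), and a second Young's with parameter $\beta$ applied to the surviving bias $\bar{\boldsymbol{\omega}}_k$, combined with Jensen and Lemma \ref{lemm:omega_vareps}(i), produces the trailing $\varepsilon_k/\beta$ term together with another $\tfrac{\gamma\beta}{2}\|\nabla f^\eta(\bar{\mathbf{x}}_k)\|^2$ contribution.

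For the quadratic piece $\tfrac{L_0 n}{2\eta}\|\bar{\mathbf{x}}_{k+1}-\bar{\mathbf{x}}_k\|^2$, I apply $\|a+b\|^2\le 2\|a\|^2 + 2\|b\|^2$ with Lemma \ref{lemma:delta_x_in_terms_of_y}(i) to separate a $\|\underline{\bar{\mathbf{y}}}_k\|^2$ piece from a $\|\mathbf{y}_k - \mathbf{1}\underline{\bar{\mathbf{y}}}_k\|^2$ piece (using Cauchy--Schwarz on the $\mathbf{1}^T(\cdot)$ factor), and then decompose $\mathbf{y}_k - \mathbf{1}\underline{\bar{\mathbf{y}}}_k = (\underline{\mathbf{y}}_k - \mathbf{1}\underline{\bar{\mathbf{y}}}_k) + (\mathbf{y}_k - \underline{\mathbf{y}}_k)$ with one more application of the same inequality. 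The first summand is the $\|\underline{\mathbf{y}}_k - \mathbf{1}\underline{\bar{\mathbf{y}}}_k\|^2$ term appearing in the statement, while the second is uniformly bounded in expectation by Lemma \ref{lemm:omega_vareps}(ii), yielding the $\gamma$-independent bracketed constant. Finally, I invoke Lemma \ref{lem:bound_bar_y_bar_squared} on $\mathbb{E}[\|\underline{\bar{\mathbf{y}}}_k\|^2]$ to convert it into $\tfrac{2L_0 n}{m\eta}\mathbb{E}[\|\mathbf{x}_k - \mathbf{1}\bar{\mathbf{x}}_k\|^2] + 2\mathbb{E}[\|\nabla f^\eta(\bar{\mathbf{x}}_k)\|^2]$, which after multiplication by $\tfrac{L_0 n\gamma^2}{\eta}$ contributes $\tfrac{2L_0^2 n^2\gamma^2}{m\eta^2}$ to the $\|\mathbf{x}_k - \mathbf{1}\bar{\mathbf{x}}_k\|^2$ coefficient and $\tfrac{2L_0 n\gamma^2}{\eta}$ to the $\|\nabla f^\eta(\bar{\mathbf{x}}_k)\|^2$ coefficient.

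Taking total expectations and using $\varepsilon_k\le\varepsilon_0$ only inside the Lemma \ref{lemm:omega_vareps}(ii) constant assembles the claimed inequality. The main obstacle is the four-way bookkeeping: the distinct error sources --- consensus of $\mathbf{x}$, consensus of $\underline{\mathbf{y}}$, the zero-mean stochastic noise $\bar{\boldsymbol{\delta}}_k$, and the biased lower-level inexactness $\bar{\boldsymbol{\omega}}_k$ --- enter through the two Young's applications and Lemma \ref{lem:bound_bar_y_bar_squared} in interleaved fashion, and one must track them so that the two $\tfrac{\gamma\beta}{2}$ contributions combine with the $-\gamma$ from the linear identity and the $\tfrac{2L_0 n\gamma^2}{\eta}$ from Lemma \ref{lem:bound_bar_y_bar_squared} to reproduce the composite $(-2\gamma + \tfrac{3\gamma\beta}{2} + \tfrac{2L_0 n\gamma^2}{\eta})$ coefficient on $\|\nabla f^\eta(\bar{\mathbf{x}}_k)\|^2$. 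The stepsize condition $\gamma \leq (1 - \tfrac{3\beta}{2})\tfrac{\eta}{2L_0 n}$ is not actually invoked to derive the displayed inequality; it is recorded so that this coefficient becomes sufficiently negative once the lemma is merged with the consensus-error recursions in the main theorem.
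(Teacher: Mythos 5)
Your overall architecture matches the paper's: the descent lemma for the $\tfrac{L_0n}{\eta}$-smooth function $f^\eta$ evaluated along $\bar{\mathbf{x}}_k$, substitution of Lemma~\ref{lemma:delta_x_in_terms_of_y}, Young's inequalities with parameter $\beta$, the splitting $\mathbf{y}_k-\mathbf{1}\underline{\bar{\mathbf{y}}}_k=(\mathbf{y}_k-\underline{\mathbf{y}}_k)+(\underline{\mathbf{y}}_k-\mathbf{1}\underline{\bar{\mathbf{y}}}_k)$ with Lemma~\ref{lemm:omega_vareps}(ii) absorbing the first summand into the $\gamma^2$-constant, the vanishing of $\boldsymbol{\delta}_k$ under conditioning, and a final pass through Lemma~\ref{lem:bound_bar_y_bar_squared}. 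The handling of the quadratic term and of the biased $\boldsymbol{\omega}$-term is consistent with the paper's.

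The gap is in the leading inner product, and it changes the constants you end up with. You complete the square around $\nabla f^\eta(\bar{\mathbf{x}}_k)$, extracting $-\gamma\|\nabla f^\eta(\bar{\mathbf{x}}_k)\|^2$ and paying $\tfrac{\gamma\beta}{2}\|\nabla f^\eta(\bar{\mathbf{x}}_k)\|^2$ in Young's; the paper completes the square around $\underline{\bar{\mathbf{y}}}_k$, extracting $-\gamma\|\underline{\bar{\mathbf{y}}}_k\|^2$ and paying $\tfrac{\gamma\beta}{2}\|\underline{\bar{\mathbf{y}}}_k\|^2$, so that the \emph{entire} coefficient $\left(-\gamma+\tfrac{\gamma\beta}{2}+\tfrac{L_0n\gamma^2}{\eta}\right)$ sits on $\mathbb{E}[\|\underline{\bar{\mathbf{y}}}_k\|^2]$ before Lemma~\ref{lem:bound_bar_y_bar_squared} is invoked. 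These two routes are not interchangeable at the level of the stated constants. Tallying yours: $\left(-\gamma+\tfrac{\gamma\beta}{2}\right)$ from the first Young's, $+\tfrac{\gamma\beta}{2}$ from the tracker-error Young's, $+\tfrac{2L_0n\gamma^2}{\eta}$ from sending only the quadratic piece $\tfrac{L_0n\gamma^2}{\eta}\|\underline{\bar{\mathbf{y}}}_k\|^2$ through Lemma~\ref{lem:bound_bar_y_bar_squared}, gives $-\gamma+\gamma\beta+\tfrac{2L_0n\gamma^2}{\eta}$ on $\mathbb{E}[\|\nabla f^\eta(\bar{\mathbf{x}}_k)\|^2]$, not the stated $-2\gamma+\tfrac{3\gamma\beta}{2}+\tfrac{2L_0n\gamma^2}{\eta}$; likewise your consensus coefficient is $\tfrac{\gamma}{2\beta}\tfrac{L_0^2n^2}{m\eta^2}+\tfrac{2L_0^2n^2\gamma^2}{m\eta^2}$, which omits the negative contribution $\tfrac{2L_0n}{m\eta}\left(-\gamma+\tfrac{\gamma\beta}{2}\right)$ appearing in the lemma. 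So your derivation produces a valid but strictly weaker inequality, and the claim that your bookkeeping ``reproduces'' the composite coefficient is off by exactly $-\gamma+\tfrac{\gamma\beta}{2}$. It is worth noting that reaching the stated constants requires multiplying the upper bound of Lemma~\ref{lem:bound_bar_y_bar_squared} by the scalar $-\gamma+\tfrac{\gamma\beta}{2}+\tfrac{L_0n\gamma^2}{\eta}$, which is negative precisely under the displayed stepsize restriction, so that substitution reverses the inequality direction and needs separate justification; your decomposition sidesteps that issue, but at the price of not proving the lemma as stated (the weaker constants would, however, still support the theorem after minor adjustments).
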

\begin{proof}
From $\tfrac{L_0n}{2\eta}$-smoothness of $f^{\eta}$ in Lem.~\ref{Properties of spherical smoothin}, we have
\begin{align*}
&f^\eta(\bar{\mathbf{x}}_{k+1})\le f^\eta(\bar{\mathbf{x}}_{k})+\nabla f^\eta(\bar{\mathbf{x}}_k)^T(\bar{\mathbf{x}}_{k+1}-\bar{\mathbf{x}}_{k})\\
&+\tfrac{L_0n}{2\eta}\|\bar{\mathbf{x}}_{k+1}-\bar{\mathbf{x}}_{k}\|^2\\
&\overset{\footnotesize{\mbox{Lemma } }\ref{lemma:delta_x_in_terms_of_y}}{=} f^\eta(\bar{\mathbf{x}}_{k})-\gamma \nabla f^\eta(\bar{\mathbf{x}}_k)^T\underline{\bar{\mathbf{y}}}_k-\gamma \nabla f^\eta(\bar{\mathbf{x}}_k)^T\\
&\times \left(\tfrac{1}{m}\mathbf{1}^T(\mathbf{y}_k-\mathbf{1}\underline{\bar{\mathbf{y}}}_k)\right)+\tfrac{L_0n\gamma^2}{2\eta}\| \underline{\bar{\mathbf{y}}}_k+\tfrac{1}{m}\mathbf{1}^T(\mathbf{y}_k-\mathbf{1}\underline{\bar{\mathbf{y}}}_k)\|^2\\
&=f^\eta(\bar{\mathbf{x}}_{k})-\gamma \left(\nabla f^\eta(\bar{\mathbf{x}}_k)-\underline{\bar{\mathbf{y}}}_k+\underline{\bar{\mathbf{y}}}_k\right)^T\underline{\bar{\mathbf{y}}}_k\\
&-\gamma \nabla f^\eta(\bar{\mathbf{x}}_k)^T\left(\tfrac{1}{m}\mathbf{1}^T(\mathbf{y}_k-\mathbf{1}\underline{\bar{\mathbf{y}}}_k-\underline{{\mathbf{y}}}_k+\underline{{\mathbf{y}}}_k)\right)\\
&+\tfrac{L_0n\gamma^2}{2\eta}\| \underline{\bar{\mathbf{y}}}_k+\tfrac{1}{m}\mathbf{1}^T(\mathbf{y}_k-\mathbf{1}\underline{\bar{\mathbf{y}}}_k)\|^2.
\end{align*}
Invoking Lemma~\ref{lemma:delta_x_in_terms_of_y} (ii), for any $\beta>0$ we have
\begin{align*}
&f^\eta(\bar{\mathbf{x}}_{k+1})\le 
f^\eta(\bar{\mathbf{x}}_{k})-\gamma \left(\nabla f^\eta(\bar{\mathbf{x}}_k)-\underline{\bar{\mathbf{y}}}_k\right)^T\underline{\bar{\mathbf{y}}}_k-\gamma \|\underline{\bar{\mathbf{y}}}_k\|^2\\
&-\gamma \nabla f^\eta(\bar{\mathbf{x}}_k)^T\tfrac{1}{m}\mathbf{1}^T(\mathbf{y}_k-\underline{{\mathbf{y}}}_k)\\
&+\tfrac{L_0n\gamma^2}{2\eta}\| \underline{\bar{\mathbf{y}}}_k+ \tfrac{1}{m}\mathbf{1}^T(\mathbf{y}_k-\mathbf{1}\underline{\bar{\mathbf{y}}}_k)\|^2\\
&{\le}f^\eta(\bar{\mathbf{x}}_{k})+\tfrac{\gamma}{2\beta} \left\|\nabla f^\eta(\bar{\mathbf{x}}_k)-\underline{\bar{\mathbf{y}}}_k\right\|^2+\tfrac{\gamma \beta}{2}\|\underline{\bar{\mathbf{y}}}_k\|^2\\
&-\gamma \|\underline{\bar{\mathbf{y}}}_k\|^2-\gamma \nabla f^\eta(\bar{\mathbf{x}}_k)^T\tfrac{1}{m}\mathbf{1}^T(\mathbf{y}_k-\underline{{\mathbf{y}}}_k)\\
&+\tfrac{L_0n\gamma^2}{\eta}\| \underline{\bar{\mathbf{y}}}_k\|^2+\tfrac{L_0n\gamma^2}{\eta}\|  \tfrac{1}{m}\mathbf{1}^T(\mathbf{y}_k-\mathbf{1}\underline{\bar{\mathbf{y}}}_k)\|^2.
\end{align*}
Taking expectations on the both sides, we obtain 
\begin{align*}
&\mathbb{E}[f^\eta(\bar{\mathbf{x}}_{k+1})]  \le\mathbb{E}[ f^\eta(\bar{\mathbf{x}}_{k})]+\tfrac{\gamma}{2\beta} \mathbb{E}\left[\left\|\nabla f^\eta(\bar{\mathbf{x}}_k)-\underline{\bar{\mathbf{y}}}_k\right\|^2\right]\\
&+\tfrac{\gamma \beta}{2}\mathbb{E}\left[\|\underline{\bar{\mathbf{y}}}_k\|^2\right]-\gamma \mathbb{E}[\|\underline{\bar{\mathbf{y}}}_k\|^2]\\
&-\gamma \mathbb{E}\left[\nabla f^\eta(\bar{\mathbf{x}}_k)^T\tfrac{1}{m}\mathbf{1}^T(\mathbf{y}_k-\underline{{\mathbf{y}}}_k)\right]\\
&+\tfrac{L_0n\gamma^2}{\eta}\mathbb{E}[\| \underline{\bar{\mathbf{y}}}_k\|^2]+\tfrac{L_0n\gamma^2}{\eta}\mathbb{E}[\|  \mathbf{y}_k-\mathbf{1}\underline{\bar{\mathbf{y}}}_k \|^2].
\end{align*}
Using \eqref{eqn:compact_alg2}, \eqref{eqn:y_underbar}, \eqref{eqn:y_underbar1}, and Lemma~\ref{lemma:g_ik_eta_props}, we obtain
\begin{align*}
&\mathbb{E}[f^\eta(\bar{\mathbf{x}}_{k+1})]{\le}\mathbb{E}[ f^\eta(\bar{\mathbf{x}}_{k})]+\left(-\gamma + \tfrac{\gamma \beta}{2}+\tfrac{L_0n\gamma^2}{\eta}\right)\mathbb{E}[\| \underline{\bar{\mathbf{y}}}_k\|^2]\\
&+\tfrac{\gamma}{2\beta} \mathbb{E}\left[\left\|\nabla f^\eta(\bar{\mathbf{x}}_k)-\tfrac{1}{m} \textstyle\sum_{i=1}^m \nabla_\mathbf{x} f_i^\eta(\mathbf{x}_{i,k}) \right\|^2\right]\\
&+\tfrac{\gamma \beta}{2}\mathbb{E}\left[\|\nabla f^\eta(\bar{\mathbf{x}}_k)\|^2\right]+\tfrac{2}{\beta}\mathbb{E}\left[\|{\boldsymbol{\omega}}^{\eta,\varepsilon_{k+1}}_{k+1}\|^2\right]\\
&+\tfrac{2}{\beta}\mathbb{E}\left[\|{\boldsymbol{\omega}}^{\eta,\varepsilon_k}_k\|^2\right] +\tfrac{2L_0n\gamma^2}{\eta}\mathbb{E}[\| \mathbf{y}_k-\mathbf{1}\underline{\bar{\mathbf{y}}}_k\|^2+\|\underline{{\mathbf{y}}}_k-\underline{{\mathbf{y}}}_k\|^2]\\
&\overset{\scriptsize{\mbox{Lemma }}{\ref{lemm:omega_vareps}}}{\le}
\mathbb{E}[ f^\eta(\bar{\mathbf{x}}_{k})]+\left(-\gamma + \tfrac{\gamma \beta}{2}+\tfrac{L_0n\gamma^2}{\eta}\right)\mathbb{E}[\| \underline{\bar{\mathbf{y}}}_k\|^2]\\
&+\tfrac{\gamma}{2\beta}\tfrac{L_0^2n^2}{m\eta^2} \mathbb{E}\left[\left\|\mathbf{x}_k-\mathbf{1}\bar{\mathbf{x}_k}\right\|^2\right]+\tfrac{2L_0n\gamma^2}{\eta}\mathbb{E}[\| \underline{{\mathbf{y}}}_k-\mathbf{1}\underline{\bar{\mathbf{y}}}_k\|^2]\\
&+\tfrac{2L_0n\gamma^2}{\eta}\ 2m(n^2L_0^2+ \tfrac{4\tilde L_0^2n^2\varepsilon_0}{\eta^2})\left(1 + \tfrac{8(1+\me{\rho^2})}{ (1-\me{\rho^2})^2}\right)\\
&+\tfrac{\gamma \beta}{2}\mathbb{E}\left[\|\nabla f^\eta(\bar{\mathbf{x}}_k)\|^2\right]+\tfrac{4}{\beta}\left(\tfrac{4\tilde L_0^2n^2\varepsilon_k}{\eta^2}\right) .
\end{align*}
\me{Based on the bound for $\gamma$, we have $\left(-\gamma+\tfrac{\gamma \beta}{2}+\tfrac{L_0n\gamma^2}{\eta}\right)<0$. Invoking Lemma~\ref{lem:bound_bar_y_bar_squared}, we obtain
\begin{align*}
&\mathbb{E}[f^\eta(\bar{\mathbf{x}}_{k+1})]\le\mathbb{E}[ f^\eta(\bar{\mathbf{x}}_{k})]+\tfrac{2L_0n\gamma^2}{\eta}\mathbb{E}[\| \underline{{\mathbf{y}}}_k-\mathbf{1}\underline{\bar{\mathbf{y}}}_k\|^2]\\
&{+}\left(\tfrac{\gamma}{2\beta}\tfrac{L_0^2n^2}{m\eta^2}-\tfrac{L_0n}{m\eta} \left(-\gamma + \tfrac{\gamma \beta}{2}+\tfrac{L_0n\gamma^2}{\eta}\right)\right) \mathbb{E}\left[\left\|\mathbf{x}_k-\mathbf{1}\bar{\mathbf{x}_k}\right\|^2\right]\\
&+\tfrac{2L_0n\gamma^2}{\eta}  2m(n^2L_0^2+ \tfrac{4\tilde L_0^2n^2\varepsilon_0}{\eta^2})\left(1 + \tfrac{8(1+\me{\rho^2})}{ (1-\me{\rho^2})^2}\right)\\
&+\left(\tfrac{-\gamma}{2} + \tfrac{3\gamma \beta}{4}+\tfrac{L_0n\gamma^2}{2\eta}\right)\mathbb{E}\left[\|\nabla f^\eta(\bar{\mathbf{x}}_k)\|^2\right] +\tfrac{16\tilde L_0^2n^2\varepsilon_k}{\beta \eta^2} .
\end{align*}}
\end{proof}
\begin{lemma}\label{Iterates Contraction} Consider Algorithm~\ref{alg:DZGT}. Let $\beta >0$. Under Assumptions~\ref{mixxx}--\ref{assum:main2}, for all $k\geq 0$, the following holds.
\begin{align*}
&\mbox{(i) } \mathbb{E}[\|\mathbf{x}_{k+1}-\mathbf{1}\bar {\mathbf{x}}_{k+1}\|^2]\le(1+\beta)\rho^2 \mathbb{E}\left[\|\mathbf{x}_{k}-\mathbf{1}\bar {\mathbf{x}}_{k}\|^2\right]\\
&+3(1+\tfrac{1}{\beta})\gamma^2 \mathbb{E}[\|\underline{\mathbf{y}}_{k}-\mathbf{1}\underline{\bar {\mathbf{y}}}_{k}\|^2]\\
&+6(1+\tfrac{1}{\beta})\gamma^2 2m(n^2L_0^2+ \tfrac{4\tilde L_0^2n^2\varepsilon_0}{\eta^2})\left(1 + \tfrac{8(1+\me{\rho^2})}{ (1-\me{\rho^2})^2}\right).\\
&\mbox{(ii) }\mathbb{E}[\|\underline{\mathbf{y}}_{k+1}-\mathbf{1}\underline{\bar {\mathbf{y}}}_{k+1}\|^2]\le  \left((1+\beta)\rho^2+\tfrac{4L_0^2n^2\gamma^2}{\eta^2}(1+\tfrac{1}{\beta})^2\right)\\
\begin{split}
&\mathbb{E}[\|\underline{ \mathbf{y}}_k-\mathbf{1}\underline{\bar{ \mathbf{y}}}_k\|^2]+\Big(\tfrac{4L_0^2n^2}{\eta^2}(1+\tfrac{1}{\beta})^2\\
&+\tfrac{L_0^2n^2}{\eta^2}\rho^2(1+\beta)(1+\tfrac{1}{\beta})+\tfrac{2L_0n}{m\eta}\tfrac{4L_0^2n^2\gamma^2}{\eta^2}(1+\tfrac{1}{\beta})^2\Big)
\end{split}\\
&\mathbb{E}\left[\|\mathbf{x}_k-\mathbf{1}\bar{\mathbf{x}}_k\|^2\right]\\
&+\tfrac{4L_0^2n^2\gamma^2}{\eta^2}(1+\tfrac{1}{\beta})^2 2m(n^2L_0^2+ \tfrac{4\tilde L_0^2n^2\varepsilon_0}{\eta^2})\left(1 + \tfrac{8(1+\me{\rho^2})}{ (1-\me{\rho^2})^2}\right)\\
&+\tfrac{8L_0^2n^2\gamma^2}{\eta^2}(1+\tfrac{1}{\beta})^2\mathbb{E}\left[\|\nabla f^\eta(\bar{\mathbf{x}}_k)\|^2\right].
\end{align*}
\begin{proof}
(i) From Assumption~\ref{mixxx}, for $\beta>0$ we have
\begin{align*}
&\|\mathbf{x}_{k+1}-\mathbf{1}\bar {\mathbf{x}}_{k+1}\|^2=\|\mathbf{w}\mathbf{x}_k-\gamma \mathbf{y}_k-\mathbf{1}(\bar{\mathbf{x}}_k - {\gamma}\bar{\mathbf{y}}_k)\|^2\\
&\le (1+\beta)\|\mathbf{w}\mathbf{x}_k-\mathbf{1}\bar{\mathbf{x}}_k\|^2+ (1+\tfrac{1}{\beta})\gamma^2\|\mathbf{y}_k-\mathbf{1}\bar{\mathbf{y}}_k\|^2\\
&\le  (1+\beta)\rho^2\|\mathbf{x}_k-\mathbf{1}\bar{\mathbf{x}}_k\|^2+ 3(1+\tfrac{1}{\beta})\gamma^2\|\mathbf{y}_k-\underline{\mathbf{y}}_k\|^2\\
&+3(1+\tfrac{1}{\beta})\gamma^2\|\underline{\mathbf{y}}_k-\mathbf{1}\underline{\bar{\mathbf{y}}}_k\|^2+3(1+\tfrac{1}{\beta})\gamma^2\|\mathbf{1}\bar{\mathbf{y}}_k-\mathbf{1}\underline{\bar{\mathbf{y}}}_k\|^2.
\end{align*}
Taking expectations on both sides and using Lemma~\ref{lemm:omega_vareps}, we obtain the inequality in (i).

\noindent (ii) From Assumption~\ref{mixxx} and equation~\eqref{eqn:y_underbar}, we have
\begin{align*}
\begin{split}
&\|\underline{\mathbf{y}}_{k+1}-\mathbf{1}\underline{\bar {\mathbf{y}}}_{k+1}\|^2\le \Big\|\mathbf{w}\underline{ \mathbf{y}}_k +\nabla_{\mathbf{x}} \mathbf{f}^\eta(\mathbf{x}_{k+1})-\nabla_{\mathbf{x}} \mathbf{f}^\eta(\mathbf{x}_{k})\\
&-\tfrac{1}{m}\mathbf{1}\mathbf{1}^T\left(\mathbf{w}\underline{ \mathbf{y}}_k +\nabla_{\mathbf{x}} \mathbf{f}^\eta(\mathbf{x}_{k+1})-\nabla_{\mathbf{x}} \mathbf{f}^\eta(\mathbf{x}_{k})\right)\Big\|^2
\end{split}\\
&\le (1+\beta)\rho^2\|\underline{ \mathbf{y}}_k-\mathbf{1}\underline{\bar{ \mathbf{y}}}_k\|^2\\
&+(1+\tfrac{1}{\beta})\|\nabla_{\mathbf{x}} \mathbf{f}^\eta(\mathbf{x}_{k+1})-\nabla_{\mathbf{x}} \mathbf{f}^\eta(\mathbf{x}_{k})\|^2
\end{align*} 
We bound the second term as follows. 
\begin{align*}
&\|\nabla_{\mathbf{x}} \mathbf{f}^\eta(\mathbf{x}_{k+1})-\nabla_{\mathbf{x}} \mathbf{f}^\eta(\mathbf{x}_{k})\|^2 \le \tfrac{L_0^2n^2}{\eta^2 }\|\mathbf{x}_{k+1}-\mathbf{x}_{k}\|^2\\
&=\tfrac{L_0^2n^2}{\eta^2 }\| \mathbf{w}(\mathbf{x}_k-\mathbf{1}\bar{\mathbf{x}}_k)+\mathbf{1}\bar{\mathbf{x}}_k-\mathbf{x}_{k}-\gamma \mathbf{y}_k\|^2\\
&\le \tfrac{L_0^2n^2}{\eta^2 }\rho^2(1+\beta)\|\mathbf{x}_k-\mathbf{1}\bar{\mathbf{x}}_k\|^2\\
&+\tfrac{L_0^2n^2}{\eta^2 }(1+\tfrac{1}{\beta})\|\mathbf{1}\bar{\mathbf{x}}_k-\mathbf{x}_{k}-\gamma \mathbf{y}_k\|^2\\
&\le  \tfrac{L_0^2n^2}{\eta^2 }\rho^2(1+\beta)\|\mathbf{x}_k-\mathbf{1}\bar{\mathbf{x}}_k\|^2+\tfrac{4L_0^2n^2(1+\tfrac{1}{\beta})}{\eta^2 }\|\mathbf{x}_k-\mathbf{1}\bar{\mathbf{x}}_k\|^2\\
&+\tfrac{4L_0^2n^2\gamma^2}{\eta^2 }(1+\tfrac{1}{\beta})\|\mathbf{y}_k-\underline{\mathbf{y}}_k\|^2+\tfrac{4L_0^2n^2\gamma^2(1+\tfrac{1}{\beta})}{\eta^2 }\|\underline{\mathbf{y}}_k-\mathbf{1}\underline{\bar{\mathbf{y}}}_k\|^2\\
&+\tfrac{4L_0^2n^2\gamma^2}{\eta^2 }(1+\tfrac{1}{\beta})\|\mathbf{1}\underline{\bar{\mathbf{y}}}_k\|^2.
\end{align*}
From the two preceding inequalities, we obtain
\begin{align*}
&\|\underline{\mathbf{y}}_{k+1}-\mathbf{1}\underline{\bar {\mathbf{y}}}_{k+1}\|^2\\
&\le \left((1+\beta)\rho^2+\tfrac{4L_0^2n^2\gamma^2}{\eta^2}(1+\tfrac{1}{\beta})^2\right)\|\underline{ \mathbf{y}}_k-\mathbf{1}\underline{\bar{ \mathbf{y}}}_k\|^2\\
&+\left(\tfrac{4L_0^2n^2}{\eta^2 }(1+\tfrac{1}{\beta})^2+\tfrac{L_0^2n^2}{\eta^2 }\rho^2(1+\beta)(1+\tfrac{1}{\beta})\right)\|\mathbf{x}_k-\mathbf{1}\bar{\mathbf{x}}_k\|^2\\
&+\tfrac{4L_0^2n^2\gamma^2}{\eta^2 }(1+\tfrac{1}{\beta})^2\|\mathbf{y}_k-\underline{\mathbf{y}}_k\|^2+\tfrac{4L_0^2n^2\gamma^2}{\eta^2 }(1+\tfrac{1}{\beta})^2\|\underline{\bar{\mathbf{y}}}_k\|^2.
\end{align*}
Taking expectations on both sides. We have
\begin{align*}
&\mathbb{E}\left[\|\underline{\mathbf{y}}_{k+1}-\mathbf{1}\underline{\bar {\mathbf{y}}}_{k+1}\|^2\right]\le \left((1+\beta)\rho^2+\tfrac{4L_0^2n^2\gamma^2}{\eta^2 }(1+\tfrac{1}{\beta})^2\right)\\
\begin{split}
&\mathbb{E}\left[\|\underline{ \mathbf{y}}_k-\mathbf{1}\underline{\bar{ \mathbf{y}}}_k\|^2\right]+\Big(\tfrac{4L_0^2n^2}{\eta^2 }(1+\tfrac{1}{\beta})^2\\
&+\tfrac{L_0^2n^2}{\eta^2 }\rho^2(1+\beta)(1+\tfrac{1}{\beta})\Big)\mathbb{E}\left[\|\mathbf{x}_k-\mathbf{1}\bar{\mathbf{x}}_k\|^2\right]+\tfrac{4L_0^2n^2\gamma^2}{\eta^2}
\end{split}\\
&(1+\tfrac{1}{\beta})^2\mathbb{E}\left[\|\mathbf{y}_k-\underline{\mathbf{y}}_k\|^2\right]+\tfrac{4L_0^2n^2\gamma^2}{\eta^2}(1+\tfrac{1}{\beta})^2\mathbb{E}\left[\|\underline{\bar{\mathbf{y}}}_k\|^2\right]\\
&\overset{\scriptsize{\mbox{Lemma }}{\ref{lemm:omega_vareps}}}{\le} \left((1+\beta)\rho^2+\tfrac{4L_0^2n^2\gamma^2}{\eta^2}(1+\tfrac{1}{\beta})^2\right)\mathbb{E}\left[\|\underline{ \mathbf{y}}_k-\mathbf{1}\underline{\bar{ \mathbf{y}}}_k\|^2\right]\\
&+\left(\tfrac{4L_0^2n^2}{\eta^2}(1+\tfrac{1}{\beta})^2+\tfrac{L_0^2n^2}{\eta^2}\rho^2(1+\beta)(1+\tfrac{1}{\beta})\right)\\
&\mathbb{E}\left[\|\mathbf{x}_k-\mathbf{1}\bar{\mathbf{x}}_k\|^2\right]+\tfrac{4L_0^2n^2\gamma^2}{\eta^2}(1+\tfrac{1}{\beta})^2 2m(n^2L_0^2+ \tfrac{4\tilde L_0^2n^2\varepsilon_0}{\eta^2})\\
&\left(1 + \tfrac{8(1+\me{\rho^2})}{ (1-\me{\rho^2})^2}\right)+\tfrac{4L_0^2n^2\gamma^2}{\eta^2}(1+\tfrac{1}{\beta})^2\mathbb{E}\left[\|\underline{\bar{\mathbf{y}}}_k\|^2\right].
\end{align*}
Invoking Lemma~\ref{main-parameter}, we obtain the inequality in (ii). 
\end{proof}
\end{lemma}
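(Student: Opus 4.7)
The plan is to derive the two descent-type recursions by starting from the compact update rules \eqref{eqn:compact_alg1} and \eqref{eqn:y_underbar}, applying Young's inequality with parameter $\beta$ to separate a contractive piece from a residual, and then leveraging the spectral-gap contraction $\|\mathbf{w}-\tfrac{1}{m}\mathbf{1}\mathbf{1}^T\|\leq \rho$ from Assumption~\ref{mixxx} and the $\tfrac{L_0 n}{\eta}$-Lipschitz smoothness of $\nabla_\mathbf{x}\mathbf{f}^\eta$ from Lemma~\ref{Properties of spherical smoothin}(ii). The accumulated errors due to the inexact lower-level oracle, which enter only through the difference $\mathbf{y}_k-\underline{\mathbf{y}}_k$, are controlled uniformly in $k$ by Lemma~\ref{lemm:omega_vareps}(ii).

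For part (i), I would first use column stochasticity to write $\mathbf{x}_{k+1}-\mathbf{1}\bar{\mathbf{x}}_{k+1}=(\mathbf{w}\mathbf{x}_k-\mathbf{1}\bar{\mathbf{x}}_k)-\gamma(\mathbf{y}_k-\mathbf{1}\bar{\mathbf{y}}_k)$. A Young's split with weight $\beta$ isolates the contractive term $(1+\beta)\|\mathbf{w}\mathbf{x}_k-\mathbf{1}\bar{\mathbf{x}}_k\|^2\leq (1+\beta)\rho^2\|\mathbf{x}_k-\mathbf{1}\bar{\mathbf{x}}_k\|^2$ from the tracking residual $(1+1/\beta)\gamma^2\|\mathbf{y}_k-\mathbf{1}\bar{\mathbf{y}}_k\|^2$. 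I would then decompose $\mathbf{y}_k-\mathbf{1}\bar{\mathbf{y}}_k=(\mathbf{y}_k-\underline{\mathbf{y}}_k)+(\underline{\mathbf{y}}_k-\mathbf{1}\underline{\bar{\mathbf{y}}}_k)+\mathbf{1}(\underline{\bar{\mathbf{y}}}_k-\bar{\mathbf{y}}_k)$, use $\|a+b+c\|^2\leq 3(\|a\|^2+\|b\|^2+\|c\|^2)$, and bound the last piece by $\|\mathbf{y}_k-\underline{\mathbf{y}}_k\|^2$ via $\|\tfrac{1}{m}\mathbf{1}\mathbf{1}^T(\mathbf{y}_k-\underline{\mathbf{y}}_k)\|^2\leq \|\mathbf{y}_k-\underline{\mathbf{y}}_k\|^2$. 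Taking expectations and invoking Lemma~\ref{lemm:omega_vareps}(ii) turns every $\|\mathbf{y}_k-\underline{\mathbf{y}}_k\|^2$ contribution into the stated $\varepsilon_0$-dependent constant, yielding the claim.

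For part (ii), I would apply $\mathbf{I}-\tfrac{1}{m}\mathbf{1}\mathbf{1}^T$ to \eqref{eqn:y_underbar}, giving $\underline{\mathbf{y}}_{k+1}-\mathbf{1}\underline{\bar{\mathbf{y}}}_{k+1}=(\mathbf{I}-\tfrac{1}{m}\mathbf{1}\mathbf{1}^T)[\mathbf{w}\underline{\mathbf{y}}_k+\nabla_\mathbf{x}\mathbf{f}^\eta(\mathbf{x}_{k+1})-\nabla_\mathbf{x}\mathbf{f}^\eta(\mathbf{x}_k)]$. A Young's split with weight $\beta$ yields $(1+\beta)\rho^2\|\underline{\mathbf{y}}_k-\mathbf{1}\underline{\bar{\mathbf{y}}}_k\|^2$ (using $\mathbf{w}\mathbf{1}=\mathbf{1}$ together with Assumption~\ref{mixxx}) plus $(1+1/\beta)\|\nabla_\mathbf{x}\mathbf{f}^\eta(\mathbf{x}_{k+1})-\nabla_\mathbf{x}\mathbf{f}^\eta(\mathbf{x}_k)\|^2$. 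Invoking Lemma~\ref{Properties of spherical smoothin}(ii) bounds the latter by $\tfrac{L_0^2n^2}{\eta^2}\|\mathbf{x}_{k+1}-\mathbf{x}_k\|^2$. I then use $\mathbf{x}_{k+1}-\mathbf{x}_k=\mathbf{w}(\mathbf{x}_k-\mathbf{1}\bar{\mathbf{x}}_k)+(\mathbf{1}\bar{\mathbf{x}}_k-\mathbf{x}_k)-\gamma\mathbf{y}_k$ (exploiting $\mathbf{w}\mathbf{1}=\mathbf{1}$), apply Young's with weight $\beta$ followed by a further constant-factor split, and expand $\|\mathbf{y}_k\|^2$ through the same three-term decomposition used in (i) via $\underline{\mathbf{y}}_k$ and $\mathbf{1}\underline{\bar{\mathbf{y}}}_k$. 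After taking expectations, $\|\mathbf{y}_k-\underline{\mathbf{y}}_k\|^2$ is absorbed by Lemma~\ref{lemm:omega_vareps}(ii) into the $\varepsilon_0$-term, while $\|\mathbf{1}\underline{\bar{\mathbf{y}}}_k\|^2=m\|\underline{\bar{\mathbf{y}}}_k\|^2$ is handled by Lemma~\ref{lem:bound_bar_y_bar_squared} — this is precisely what produces both the extra $\tfrac{2L_0 n}{m\eta}\tfrac{4L_0^2n^2\gamma^2}{\eta^2}(1+1/\beta)^2\|\mathbf{x}_k-\mathbf{1}\bar{\mathbf{x}}_k\|^2$ contribution and the $\tfrac{8L_0^2n^2\gamma^2}{\eta^2}(1+1/\beta)^2\|\nabla f^\eta(\bar{\mathbf{x}}_k)\|^2$ contribution.

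The main obstacle will be bookkeeping: ensuring that every $\mathbf{y}_k$-type quantity is expanded through $\underline{\mathbf{y}}_k$ so that only the uniformly bounded tracking error from Lemma~\ref{lemm:omega_vareps}(ii) absorbs a constant (rather than a $k$-dependent drift), and carefully allocating the Young's weight $\beta$ so that the contractive rate $(1+\beta)\rho^2$ appears in exactly the two desired places while all other Young's-type splits become harmless constants of the form $(1+1/\beta)$ multipliers on the residual terms. No deep new idea is required beyond the standard gradient-tracking template; the novelty lies in threading the inexactness terms $\boldsymbol{\omega}_k^{\eta,\varepsilon_k}$ through the analysis without breaking the contraction.
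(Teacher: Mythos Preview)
Your proposal is correct and follows essentially the same route as the paper's proof: the same Young's split with weight $\beta$, the spectral-gap contraction from Assumption~\ref{mixxx}, the three-term decomposition of $\mathbf{y}_k$ through $\underline{\mathbf{y}}_k$ and $\mathbf{1}\underline{\bar{\mathbf{y}}}_k$, the Lipschitz bound from Lemma~\ref{Properties of spherical smoothin}(ii), and the final applications of Lemma~\ref{lemm:omega_vareps}(ii) and Lemma~\ref{lem:bound_bar_y_bar_squared}. The only minor difference is that in part (ii) the paper splits $\mathbf{1}\bar{\mathbf{x}}_k-\mathbf{x}_k-\gamma\mathbf{y}_k$ directly into four pieces (one consensus term and the three-term $\mathbf{y}_k$ expansion), which is exactly what your ``further constant-factor split'' amounts to.
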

In the following, we introduce a Lyapunov function~\cite{lu2019gnsd} that helps with obtaining the rate statements. The proof follows from Lemma~\ref{Descent Lemma} and Lemma~\ref{Iterates Contraction} and is omitted.
\begin{lemma}[Lyapunov \fy{f}unction] Consider the following function, for some $Q >0$, and $k\geq 0$.
\begin{align*}
\mathbf{L}(\mathbf{x}_k)\triangleq \mathbb{E}[f^\eta(\bar{\mathbf{x}}_k)]+\mathbb{E}[\|\mathbf{x}_k-\mathbf{1}\bar{\mathbf{x}}_k\|^2]+Q\mathbb{E}[\|\underline{ \mathbf{y}}_k-\mathbf{1}\underline{\bar{ \mathbf{y}}}_k\|^2].
\end{align*}
Then, for any $k\geq 0$ we have
\begin{align}
&\mathbf{L}(\mathbf{x}_{k+1})-\mathbf{L}(\mathbf{x}_k) \le -C_1\gamma\mathbb{E}\left[\|\nabla f^\eta(\bar{\mathbf{x}}_k)\|^2\right]\label{Lyapunov Function}\\
&-C_2\mathbb{E}\left[\|\mathbf{x}_k-\mathbf{1}\bar{\mathbf{x}}_k\|^2\right]-C_3\mathbb{E}[\|\underline{ \mathbf{y}}_k-\mathbf{1}\underline{\bar{ \mathbf{y}}}_k\|^2]+C_4\gamma^2+C_{5,k}\notag,
\end{align}
where the scalars $C_1,\ldots,C_4$ and $C_{5,k}$ are defined as follows. 
\small\begin{align*}
&C_1\triangleq \me{\tfrac{1}{2}-\tfrac{3\beta}{4}-\tfrac{L_0n\gamma}{2\eta}}-\tfrac{8QL_0^2n^2\gamma}{\eta^2}\left(1+\tfrac{1}{\beta}\right)^2,\\
&C_2 \triangleq 1-\tfrac{\gamma}{2\beta}\tfrac{L_0^2n^2}{m\eta^2}\me{+\tfrac{L_0n}{m\eta} \left(-\gamma + \tfrac{\gamma \beta}{2}+\tfrac{L_0n\gamma^2}{\eta}\right)}-(1+\beta)\rho^2 \\
&-\tfrac{4QL_0^2n^2}{\eta^2}(1+\tfrac{1}{\beta})^2-\tfrac{QL_0^2n^2}{\eta^2}\rho^2(1+\beta)(1+\tfrac{1}{\beta})\\
&\fy{-\tfrac{8QL_0^3n^3\gamma^2}{m\eta^3 }(1+\tfrac{1}{\beta})^2},\\
&C_3 \triangleq Q-\tfrac{2L_0n\gamma^2}{\eta}-3(1+\tfrac{1}{\beta})\gamma^2-Q(1+\beta)\rho^2\\
&-\tfrac{4QL_0^2n^2\gamma^2}{\eta^2}(1+\tfrac{1}{\beta})^2,\\
&C_4 \triangleq \left(\tfrac{2L_0n\gamma^2}{\eta}+6(1+\tfrac{1}{\beta})\gamma^2+\tfrac{4L_0^2n^2\gamma^2}{\eta^2}(1+\tfrac{1}{\beta})^2\right)\\
&\times 2m(n^2L_0^2+ \tfrac{4\tilde L_0^2n^2\varepsilon_0}{\eta^2})\left(1 + \tfrac{8(1+\me{\rho^2})}{ (1-\me{\rho^2})^2}\right),\\
&C_{5,k} \triangleq \theta \varepsilon_k,\text{\fy{where} } \theta \triangleq \tfrac{4}{\beta}\left(\tfrac{4 \tilde{L}_0^2n^2}{\eta^2}\right).
\end{align*}
\end{lemma}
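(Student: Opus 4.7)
The plan is to directly add the bound from Lemma~\ref{Descent Lemma} with the bound from Lemma~\ref{Iterates Contraction}(i) and $Q$ times the bound from Lemma~\ref{Iterates Contraction}(ii), matching the structure of the Lyapunov function term by term. Concretely, I would start by writing
\begin{align*}
\mathbf{L}(\mathbf{x}_{k+1}) - \mathbf{L}(\mathbf{x}_k) &= \bigl(\mathbb{E}[f^\eta(\bar{\mathbf{x}}_{k+1})] - \mathbb{E}[f^\eta(\bar{\mathbf{x}}_{k})]\bigr) \\
&\quad + \bigl(\mathbb{E}[\|\mathbf{x}_{k+1}-\mathbf{1}\bar{\mathbf{x}}_{k+1}\|^2] - \mathbb{E}[\|\mathbf{x}_{k}-\mathbf{1}\bar{\mathbf{x}}_{k}\|^2]\bigr) \\
&\quad + Q\bigl(\mathbb{E}[\|\underline{\mathbf{y}}_{k+1}-\mathbf{1}\underline{\bar{\mathbf{y}}}_{k+1}\|^2] - \mathbb{E}[\|\underline{\mathbf{y}}_{k}-\mathbf{1}\underline{\bar{\mathbf{y}}}_{k}\|^2]\bigr)
\end{align*}
and then substitute the three available upper bounds into the corresponding lines. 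This reduces the claim to pure bookkeeping: collecting coefficients of $\mathbb{E}[\|\nabla f^\eta(\bar{\mathbf{x}}_k)\|^2]$, $\mathbb{E}[\|\mathbf{x}_k-\mathbf{1}\bar{\mathbf{x}}_k\|^2]$, $\mathbb{E}[\|\underline{\mathbf{y}}_k-\mathbf{1}\underline{\bar{\mathbf{y}}}_k\|^2]$, and the two remaining deterministic/noise terms.

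For the $\|\nabla f^\eta(\bar{\mathbf{x}}_k)\|^2$ coefficient, only the Descent Lemma (contributing $-2\gamma+\tfrac{3\gamma\beta}{2}+\tfrac{2L_0 n\gamma^2}{\eta}$) and Iterates Contraction (ii) (contributing $Q\cdot\tfrac{8L_0^2n^2\gamma^2}{\eta^2}(1+\tfrac{1}{\beta})^2$) feed in; setting the sum to $-C_1\gamma$ and dividing by $\gamma$ recovers the stated $C_1$. For the $\|\mathbf{x}_k-\mathbf{1}\bar{\mathbf{x}}_k\|^2$ coefficient, three contributions appear: the Descent Lemma term $\tfrac{\gamma}{2\beta}\tfrac{L_0^2n^2}{m\eta^2}+\tfrac{2L_0n}{m\eta}(-\gamma+\tfrac{\gamma\beta}{2}+\tfrac{L_0n\gamma^2}{\eta})$, the Iterates Contraction (i) contribution $(1+\beta)\rho^2-1$ (the $-1$ arising because the Lyapunov function itself carries a $\|\mathbf{x}_k-\mathbf{1}\bar{\mathbf{x}}_k\|^2$ term that is subtracted from both sides), and the $Q$-weighted contribution from (ii). Equating the total to $-C_2$ reproduces exactly the expression displayed for $C_2$. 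The analogous computation for $\|\underline{\mathbf{y}}_k-\mathbf{1}\underline{\bar{\mathbf{y}}}_k\|^2$, where Lyapunov contributes the extra $-Q$, yields $C_3$.

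The residual noise/inexactness terms split cleanly into those that scale with $\gamma^2$ uniformly in $k$ and those that scale with $\varepsilon_k$. The former come from the $2m(n^2L_0^2+\tfrac{4\tilde L_0^2n^2\varepsilon_0}{\eta^2})(1+\tfrac{8(1+\underline{\lambda}_{max}^2)}{(1-\underline{\lambda}_{max}^2)^2})$ factor appearing in each of the three lemmas, multiplied by $\tfrac{2L_0n\gamma^2}{\eta}$, $6(1+\tfrac{1}{\beta})\gamma^2$, and $Q\cdot\tfrac{4L_0^2n^2\gamma^2}{\eta^2}(1+\tfrac{1}{\beta})^2$ respectively; their sum gives the stated $C_4\gamma^2$. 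The latter consist solely of the $\tfrac{16\tilde L_0^2n^2\varepsilon_k}{\beta\eta^2}$ term from the Descent Lemma, which equals $\theta\varepsilon_k=C_{5,k}$ for the stated $\theta$.

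The main obstacle is purely organizational rather than technical: keeping all six multiplicative factors ($\gamma$, $\beta$, $\rho$, $Q$, $\varepsilon_0$, $\varepsilon_k$) separated while collecting like terms, and being careful that the $-1$ and $-Q$ contributions from the Lyapunov function itself are applied only to the iterate-disagreement and gradient-tracker-disagreement coefficients. Once the grouping is done, each coefficient is read off directly, and no additional inequalities or assumptions beyond those invoked in the preceding two lemmas are needed; this is why the authors elected to omit the explicit calculation.
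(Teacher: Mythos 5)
Your proposal is correct and is exactly the argument the paper intends (the paper explicitly omits the proof, stating only that it follows from Lemma~\ref{Descent Lemma} and Lemma~\ref{Iterates Contraction}): substitute the three bounds into the telescoped Lyapunov difference and collect coefficients, with the $-1$ and $-Q$ offsets applied to the disagreement terms as you describe. Your bookkeeping for the $\gamma^2$-noise term, which carries the factor $Q\cdot\tfrac{4L_0^2n^2\gamma^2}{\eta^2}(1+\tfrac{1}{\beta})^2$, is in fact the correct one; the displayed $C_4$ in the statement drops that $Q$ (and already contains the $\gamma^2$ factors, so writing $C_4\gamma^2$ double-counts them), which are typos in the paper rather than gaps in your argument.
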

\me{
\begin{proof}
Using the definition of the Lyapunov \fy{function,} the bound for $\mathbb{E}[f^\eta(\bar{\mathbf{x}}_{k+1})]$ in Lemma~\ref{Descent Lemma}, \fy{and the} bounds for $ \mathbb{E}[\|\mathbf{x}_{k+1}-\mathbf{1}\bar {\mathbf{x}}_{k+1}\|^2]$ and $\mathbb{E}[\|\underline{\mathbf{y}}_{k+1}-\mathbf{1}\underline{\bar {\mathbf{y}}}_{k+1}\|^2]$ in Lemma~\ref{Iterates Contraction}, we can \fy{obtain the result}.
\end{proof}}
The main convergence rate statement is presented as follows, where we show that Algorithm~\ref{alg:DZGT}-\ref{alg:lowerlevel} admits an iteration complexity of $\mathcal{O}(\epsilon^{-2})$ for both the mean-square of consensus \fy{error} metric and the mean-square of an aggregate gradient of the smoothed implicit function.
\begin{theorem}\label{thm:main} Consider Algorithm \ref{alg:DZGT}. Let $\gamma:=\tfrac{C_0}{\sqrt{K}}$ where $C_0 \triangleq \min\{T_1,T_2,T_3\}$. Let Assumptions~\ref{mixxx}--\ref{assum:main2} hold. Suppose $\beta \me{\ \in\ } (0,\min\{{\tfrac{2}{3}},\rho^{-2}-1 \})$, $Q:=\alpha \gamma$ for \fy{some $\alpha>0$ such that $\alpha > 0.25(1+\frac{1}{\beta})^{-2}(1-\tfrac{3 \beta}{2})^{-1}$}, and  
\begin{align*}
&T_1\triangleq \tfrac{\sqrt{1+\me{32}\alpha(\me{2}-\me3\beta)(1+\tfrac{1}{\beta})^2}-1}{\tfrac{\me{16}L_0n\alpha}{\eta}(1+\tfrac{1}{\beta})\me{^2}}, \quad  \me{T_2\triangleq \tfrac{\fy{-b-\sqrt{b^2-4ac}}}{2a}},\\
&T_3 \triangleq \tfrac{\fy{\bar{b}+}\sqrt{\fy{\bar{b}}^2+\frac{16\alpha L_0^2n^2}{\eta^2}(1+\frac{1}{\beta})^2\fy{(\alpha c)}  }}{\frac{8L_0^2n^2\alpha}{\eta^2}(1+\frac{1}{\beta})^2},
\end{align*}
where we \fy{define scalars $a,b, c, \bar{b}$ as }
\begin{align*}
&a\triangleq \fy{-\tfrac{L_0^2n^2 }{m \eta^2}\left( 4\alpha(1+\tfrac{1}{\beta})^2  (1-\tfrac{3 \beta}{2} )-1 \right)},\\
&b\triangleq-\tfrac{L_0^2n^2}{2\beta m\eta^2}\fy{-}\tfrac{L_0n  }{m\eta} \fy{ (1 - \tfrac{  \beta}{2} )}-\tfrac{4\alpha L_0^2n^2}{\eta^2}(1+\tfrac{1}{\beta})^2\\
&-\tfrac{\alpha L_0^2n^2}{\eta^2}\rho^2(1+\beta)(1+\tfrac{1}{\beta}),\\
&c\triangleq  1-(1+\beta)\rho^2 , \quad \fy{\bar{b}\triangleq -}\left(\tfrac{2L_0n}{\eta}+3(1+\tfrac{1}{\beta})\right).
\end{align*}
Then, the following holds for $K \geq C_0^2\left(1-\tfrac{3 \beta}{2}\right)^{-2}\tfrac{4L_0^2n^2}{\eta^2}$.
\begin{align}
&C_1\me{\mathbb{E}\left[\|\nabla f^\eta(\bar{\mathbf{x}}_k)\|^2\right]}+\tfrac{C_2\fy{\sqrt{K}}}{C_0}\mathbb{E}\left[\|\mathbf{x}_k-\mathbf{1}\bar{\mathbf{x}}_k\|^2\right]\notag \\
&\fy{\le \ }\me{\left(\tfrac{\mathbf{L}_0-\underline{\mathbf{L}}}{C_0}+C_4C_0+2\sqrt{2}\theta \mathcal{O}(1)\right)\tfrac{1}{\sqrt{K}}}\label{ine::bound},
\end{align}
\me{where $\underline{\mathbf{L}} \fy{ \ \triangleq -L_0\eta +\inf_{x} \, f(x)}$ and $\mathbf{L}_0\triangleq \mathbf{L}(\fy{{\bf x}_0})$.} 
\end{theorem}
\begin{proof} \me{First, we \fy{show that} $C_1, C_2$, and $C_3$ \fy{are non-negative. Recall that} $Q:=\alpha \gamma$ \fy{implying that} $C_1$ is a quadratic expression in terms of $\gamma$ and the coefficient of $\gamma^2$ is negative. \fy{This implies that the term $C_0$ is} positive between \fy{the two} roots. \fy{Note that} one of the roots is negative, \fy{while the other root is equal to} $T_1$. \fy{This implies that for $0<  \gamma \le T_1$,  we have $C_1\ge 0$}}.  
 Next, we show that $C_2 \geq 0$. From the bound on $K$ and the choice of $\gamma$, we have $\gamma \le \left(1-\tfrac{3 \beta}{2}\right)\tfrac{\eta}{2L_0n}$. This implies that
\begin{align}\label{eqn:boundingC2}
& \fy{-\tfrac{8QL_0^3n^3\gamma^2}{m\eta^3 }(1+\tfrac{1}{\beta})^2  \geq -\tfrac{4QL_0^2n^2\gamma}{m \eta^2 }(1+\tfrac{1}{\beta})^2  (1-\tfrac{3 \beta}{2} )} .
\end{align}
\me{\fy{Let us} define a new term $\fy{\hat{C}_2}$ as  
\begin{align*}
\fy{\hat{C}_2} &\triangleq 1-(1+\beta)\rho^2-\tfrac{\gamma}{2\beta}\tfrac{L_0^2n^2}{m\eta^2}\fy{-}\tfrac{L_0n \fy{\gamma}}{m\eta} \fy{ (1 - \tfrac{  \beta}{2} )}\\
& -\tfrac{4\fy{\alpha \gamma}L_0^2n^2}{\eta^2}(1+\tfrac{1}{\beta})^2-\tfrac{\fy{\alpha \gamma}L_0^2n^2}{\eta^2}\rho^2(1+\beta)(1+\tfrac{1}{\beta})\\
&\fy{+\tfrac{L_0^2n^2\gamma^2}{m \eta^2} -\tfrac{4\fy{\alpha}L_0^2n^2\gamma^2}{m \eta^2 }(1+\tfrac{1}{\beta})^2  (1-\tfrac{3 \beta}{2} )}.
\end{align*}}
\noindent \fy{Invoking \eqref{eqn:boundingC2} and $Q=\alpha \gamma$, we have} $C_2\ge \fy{\hat{C}_2}$. \fy{Note that $\hat{C}_2$} admits a quadratic expression in terms of $\gamma$. \fy{Also, the assumptions on $\beta$ and $\alpha$ imply that $1-(1+\beta)\rho^2 >0$ and $1-4\alpha(1+\frac{1}{\beta})^2 (1-\frac{3 \beta}{2} )<0$. Thus, we have $a<0$, $b<0$, and $c>0$. From the definition of $T_2$ and invoking $0< \gamma \le T_2$, we have} that $C_2 \geq 0$. \fy{To show that $C_3\geq 0$, we can write
\begin{align*}
 \hat{C}_3 \triangleq \tfrac{C_3}{\gamma}&=\alpha (1-(1+\beta)\rho^2)-\left(\tfrac{2L_0n }{\eta}+3(1+\tfrac{1}{\beta})\right)\gamma \\
&- \tfrac{4\alpha L_0^2n^2 }{\eta^2}(1+\tfrac{1}{\beta})^2\gamma^2.
\end{align*}} 
The non-negativity of \fy{$\hat{C}_3$ can be shown by} invoking $0\fy{ \ <\ } \gamma \le T_3$. \fy{Next, consider \eqref{Lyapunov Function}.} Summing both sides of $\eqref{Lyapunov Function}$ over $ k=0,1,\ldots,K-1$, where $K\ge 1$, we obtain
\begin{align*}
&\underline{\mathbf{L}}-\mathbf{L}_0 \fy{\ \le \mathbf{L}(\mathbf{x}_K) -\mathbf{L}_0} \le -C_1K\gamma\me{\mathbb{E}\left[\|\nabla f^\eta(\bar{\mathbf{x}}_k)\|^2\right]}\\
&-C_2K\mathbb{E}\left[\|\mathbf{x}_k-\mathbf{1}\bar{\mathbf{x}}_k\|^2\right]\\
&-C_3K\mathbb{E}[\|\underline{ \mathbf{y}}_k-\mathbf{1}\underline{\bar{ \mathbf{y}}}_k\|^2]+C_4K\gamma^2+\textstyle \sum_{k=0}^{K-1}C_{5,k}.
\end{align*}
\fy{Rearranging the terms}, we obtain
\begin{align*}
&C_1\gamma\mathbb{E}[\|\nabla f^\eta(\bar{\mathbf{x}}_k)\|^2]+C_2\mathbb{E}\fy{[}\|\mathbf{x}_k-\mathbf{1}\bar{\mathbf{x}}_k\|^2\fy{]}  \\
&\le \tfrac{\underline{\mathbf{L}}-\mathbf{L}_0}{K}+C_4\gamma^2+\tfrac{\textstyle \sum_{k=0}^{K-1}C_{5,k}}{K}.
\end{align*}
\fy{Dividing both sides by $\gamma$ and substituting $\gamma :=\tfrac{C_0}{\sqrt{K}}$, we have}
\begin{align*}
&C_1\mathbb{E}[\|\nabla f^\eta(\bar{\mathbf{x}}_k)\|^2]+\tfrac{C_2\fy{\sqrt{K}}}{C_0}\mathbb{E}\fy{[}\|\mathbf{x}_k-\mathbf{1}\bar{\mathbf{x}}_k\|^2\fy{]}  \\
&\le  \left(\tfrac{\underline{\mathbf{L}}-\mathbf{L}_0}{C_0}+C_4C_0\right)\tfrac{1}{\sqrt{K}}+\tfrac{\textstyle \sum_{k=0}^{K-1}\theta \varepsilon_k}{K}.
\end{align*}
\me{Consider Algorithm~\ref{alg:lowerlevel}. From \cite[Theorem 2]{cui2104complexity}, and that $t_k:= \sqrt{k+1}$, we have that $\varepsilon_k = \tfrac{\mathcal{O}(1)}{\sqrt{k+1}+\Gamma}$. \fy{We} obtain
\me{\begin{align*}
&C_1\mathbb{E}[\|\nabla f^\eta(\bar{\mathbf{x}}_k)\|^2]+\tfrac{C_2\fy{\sqrt{K}}}{C_0}\mathbb{E}[\|\mathbf{x}_k-\mathbf{1}\bar{\mathbf{x}}_k\|^2]  \\
&\le  \left(\tfrac{\underline{\mathbf{L}}-\mathbf{L}_0}{C_0}+C_4C_0\right)\tfrac{1}{\sqrt{K}}+\tfrac{\textstyle \sum_{k=0}^{K-1} \tfrac{\theta \mathcal{O}(1)}{\sqrt{k+1}+\Gamma}}{K}.
\end{align*}}
\me{\fy{From}~\cite[Lemma 9 (b)]{yousefian2017smoothing}, we have
\begin{align*}
\fy{\textstyle\sum_{k=0}^{K-1}}\tfrac{1}{\sqrt{k+1}}\le{2\sqrt{K+1}-1}.
\end{align*}}
\me{Invoking this bound, we obtain the result.}}
\end{proof}
\me{\begin{remark}
The infeasibility \fy{of the equilibrium constraints in \eqref{eqn:prob} incurred by our method at iteration $k$,} is the difference between the exact solution \fy{to} the VI problem, \fy{denoted by $z(\bullet)$, and the inexact} solution \fy{computed by Alg.~\ref{alg:lowerlevel}, denoted by $z_{\varepsilon_k}(\bullet)$}. \fy{This is indeed quantified by} $\varepsilon_k=\tfrac{\mathcal{O}(1)}{\sqrt{k+1}+\Gamma}$ in \fy{view of} Lemma~\ref{lemm:omega_vareps} \fy{and the discussion in the proof of Thm.~\ref{thm:main}}. 
\end{remark}}

\section{Numerical Results}\label{sec:num}
In this section, we present preliminary experiments to validate the theoretical convergence of the proposed scheme. We compare the performance of the algorithm with that of the ZSOL-ncvx method (Algorithm 3 in \cite{cui2104complexity}). ZSOL-ncvx is a \fy{zeroth-order} method that can be viewed as a centralized counterpart of our scheme. We consider a bilevel optimization problem with the form
\begin{align*}
\min_x \ \tfrac{1}{m}\textstyle \sum_{i=1}^m\mathbb{E}[-x_1^2-3x_2-\xi(\omega)y_1(x)+\left(y_2(x)\right)^2],
\end{align*}
where $y(x)$ is the unique solution to the following parametric optimization problem.
\begin{align*}
&\min_y \  \mathbb{E}[\, 2x_1^2+y_1^2+y_2^2-\zeta(\omega)y_2\, ]\\
 &  \hbox{s.t.} \quad     x_1^2-2x_1+x_2^2-2y_1+y_2\ge -3, \notag \\
  & \ \ \ \ \quad x_2+3y_1-y_2\ge 4,\notag \\
  &\ \ \ \ \quad y_1,y_2\ge 0\notag .
\end{align*}
Notably, the constraints of the lower-level problem \fy{are characterized by} the upper-level decisions $x$. 

\textbf{Problem and algorithm parameters.} We assume that both \fy{$\xi$ and $\zeta $} \fy{are normally distributed}. We run rs-DZGT for $100$ iterations of the upper-level scheme, e.g., Alg.~\ref{alg:DZGT}, and use $\gamma \in \{10^{-5}, 10^{-6}\} $. In addition, Alg.~\ref{alg:lowerlevel} is terminated after $\sqrt{k+1}$ iterations where $k$ denotes the iteration index of Alg.~\ref{alg:DZGT}. Furthermore, for the network, we choose three settings for the mixing matrix $\bold{w}$: ring graph, \fy{a} sparse graph, and complete graph.

\textbf{Evaluation of the implicit objective function.} For each method and setting, we run the scheme five times and report the sample mean of the global objective function. Notably, to evaluate the objective function at each epoch, we use an approximation of $y(x)$ by running the projected stochastic gradient method, i.e., Alg.~\ref{alg:lowerlevel}. 

\textbf{Insights.} The implementation results are presented in Figure~\ref{fig:comparison}, \me{where the x-axis \fy{denotes} the number of iterations in the upper-level problem}. We observe that rs-DZGT appears to be more robust to the choice of the network. When the network size increases, the performance of our method does not degrade significantly. Also, by increasing the connectivity of the network among the agents, our method performs better. This is more clear from the consensus \fy{error} data provided in Table~\ref{table_example} and Table~\ref{table_example2}. Lastly, we note that rs-DZGT \fy{displays a small sensitivity} with respect to the \fy{two choices} of $\gamma$ and performs relatively close to its centralized counterpart in almost all cases. 
\me{\begin{remark}
\fy{The higher the network} connectivity, \fy{the smaller the parameter} $\rho$~\cite{pu2021distributed}, \fy{that explicitly appears in the terms} $C_1$, $C_2$, and $C_4$. \fy{Note that the smaller $\rho$, the larger the error bound in Thm.~\ref{thm:main}.}
\end{remark}}
\begin{figure*}
\centering{
\begin{table}[H]
\setlength{\tabcolsep}{0pt}
\centering
 \begin{tabular}{c || c  c  c }
  {\footnotesize {Setting}\ \ }& {\footnotesize  Ring graph} & {\footnotesize Sparse graph} & {\footnotesize Complete graph  } \\
 \hline\\
\rotatebox[origin=c]{90}{{\footnotesize {Sample \fy{ave.} implicit objective}}}
&
\begin{minipage}{.30\textwidth}
\includegraphics[scale=.199, angle=0]{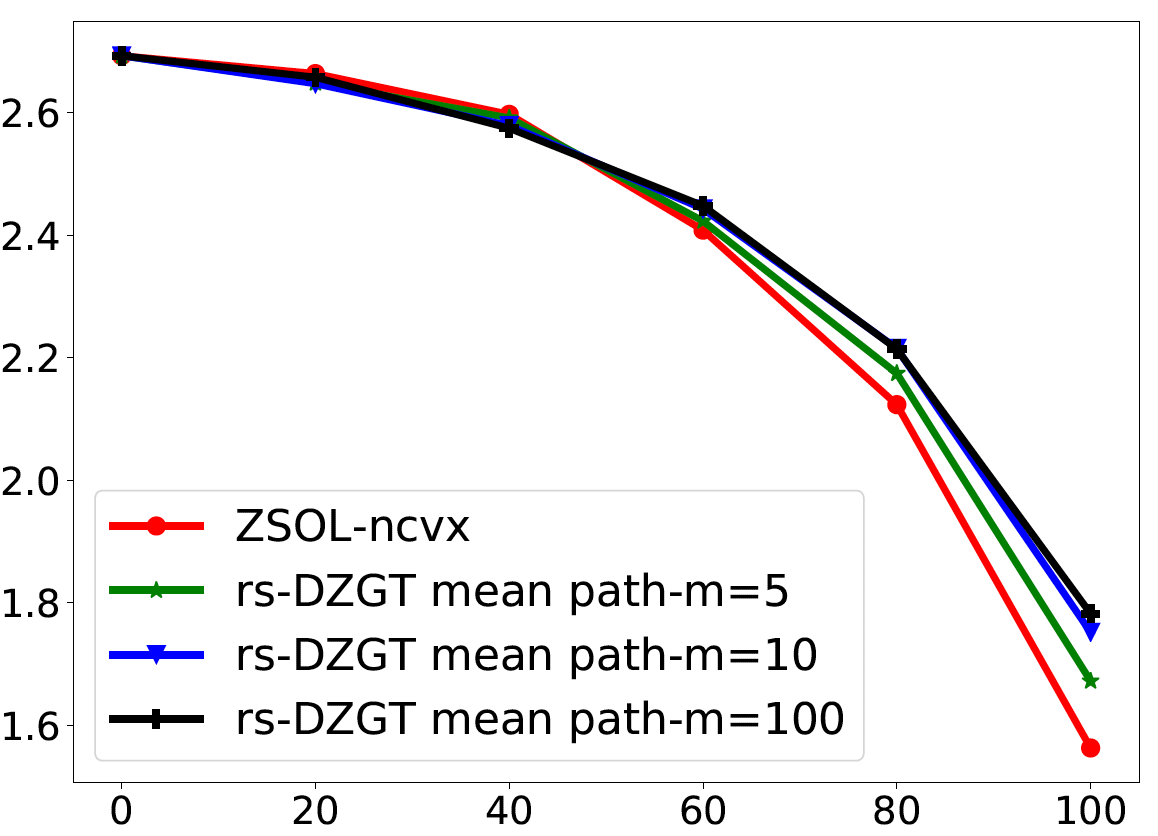}
\end{minipage}
&
\begin{minipage}{.30\textwidth}
\includegraphics[scale=.199, angle=0]{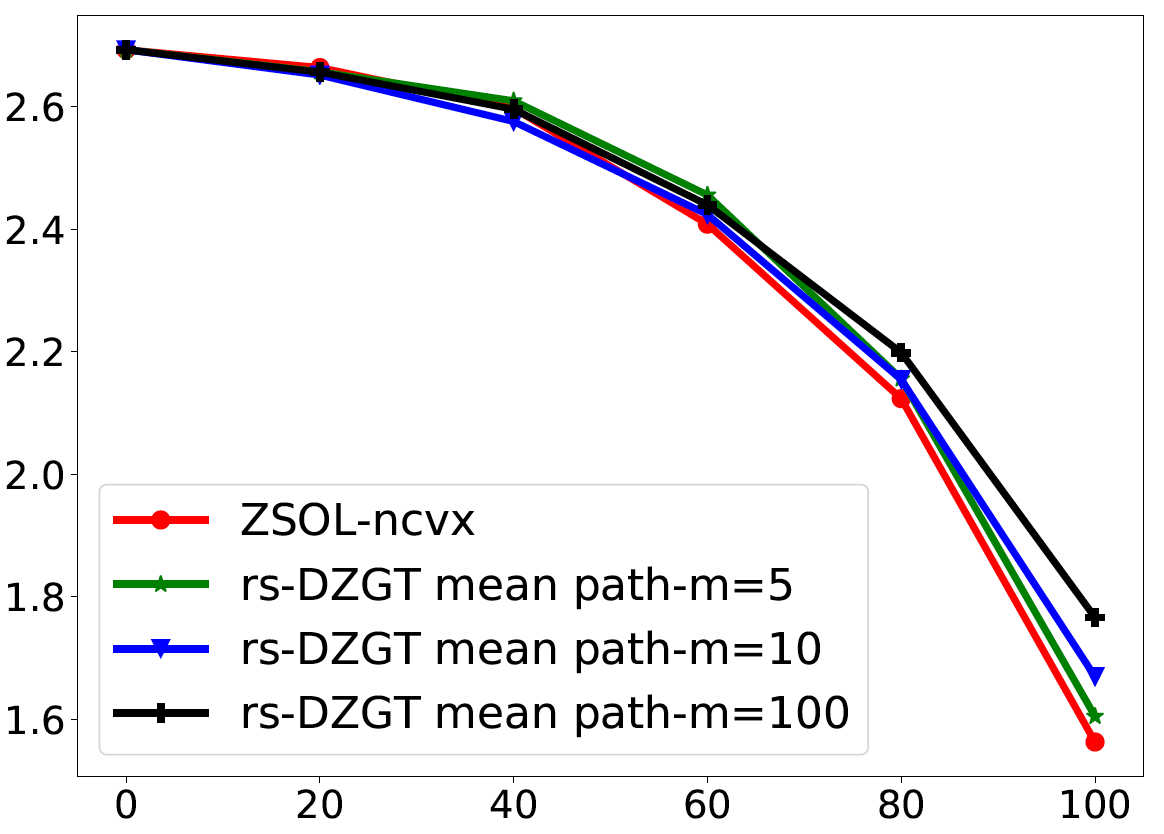}
\end{minipage}
	&
\begin{minipage}{.30\textwidth}
\includegraphics[scale=.199, angle=0]{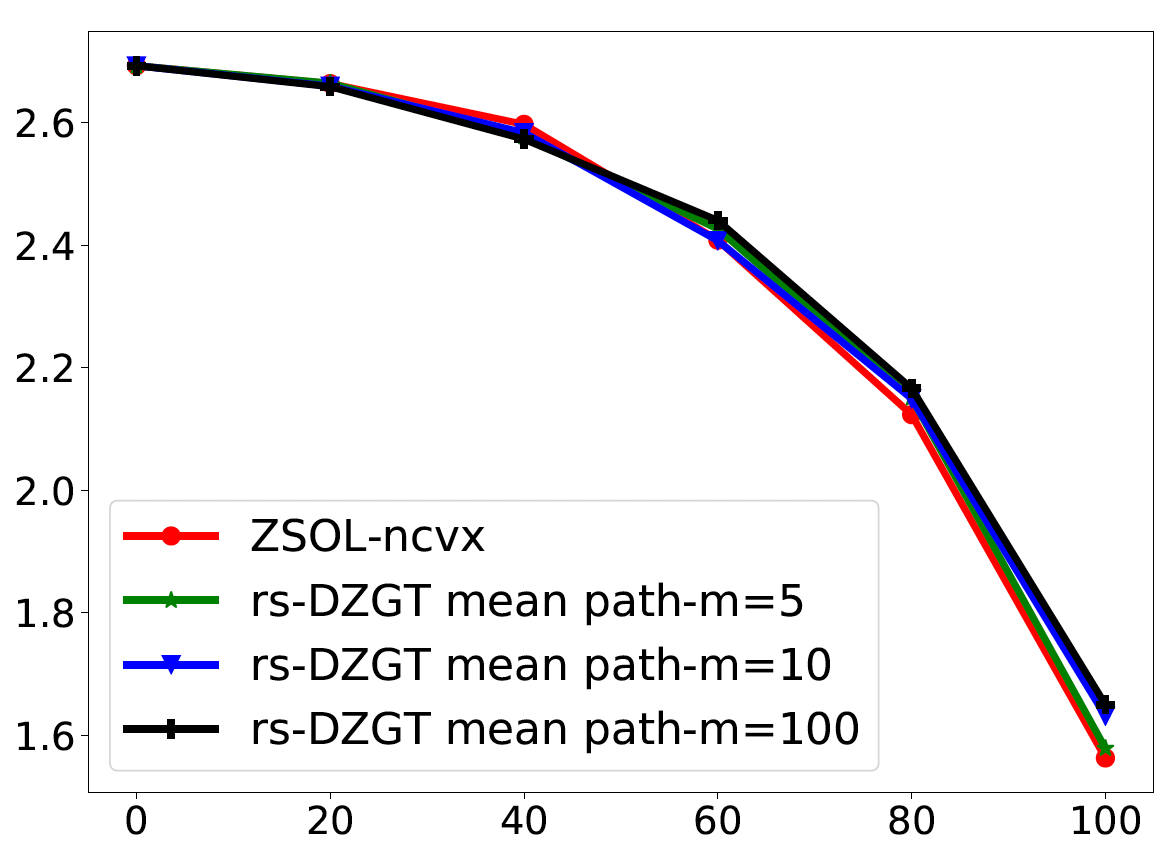}
\end{minipage}
\\ 
\hline\\
\rotatebox[origin=c]{90}{{\footnotesize {Sample \fy{ave.} implicit objective}}}
&
\begin{minipage}{.30\textwidth}
\includegraphics[scale=.199, angle=0]{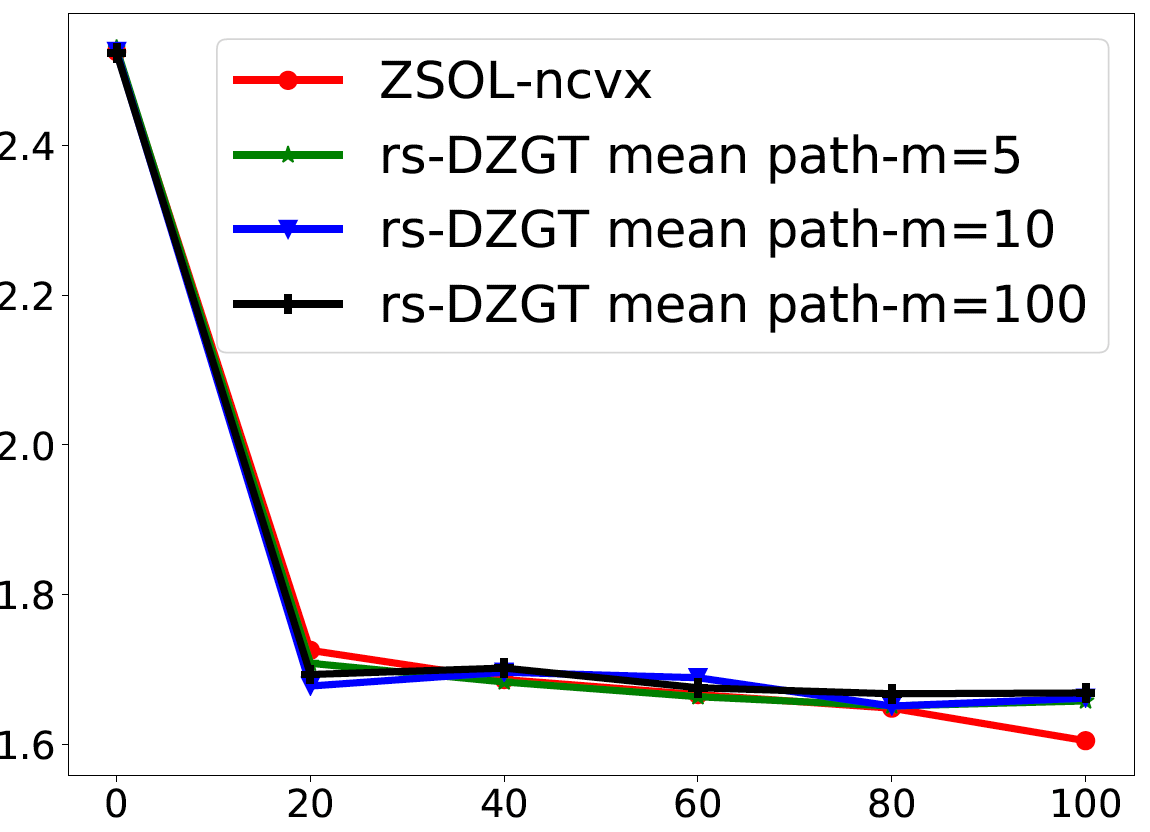}
\end{minipage}
&
\begin{minipage}{.30\textwidth}
\includegraphics[scale=.199, angle=0]{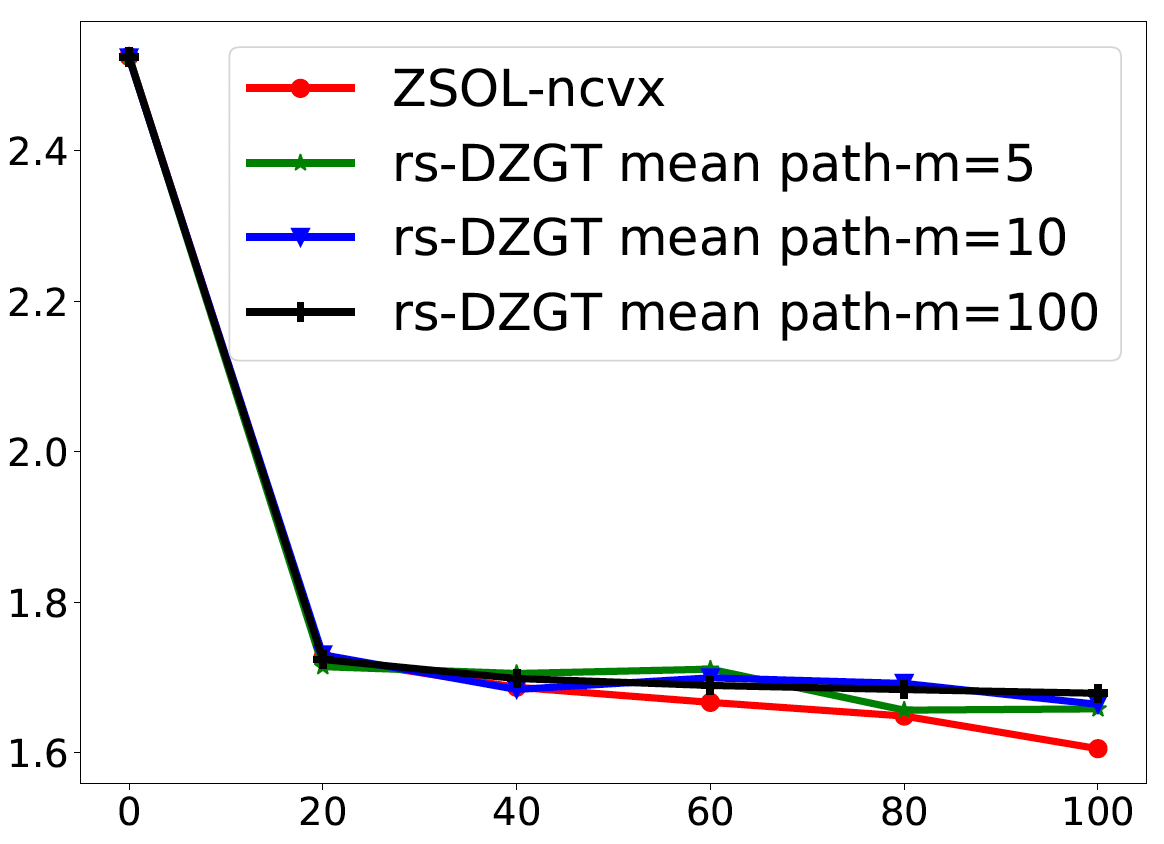}
\end{minipage}
	&
\begin{minipage}{.30\textwidth}
\includegraphics[scale=.199, angle=0]{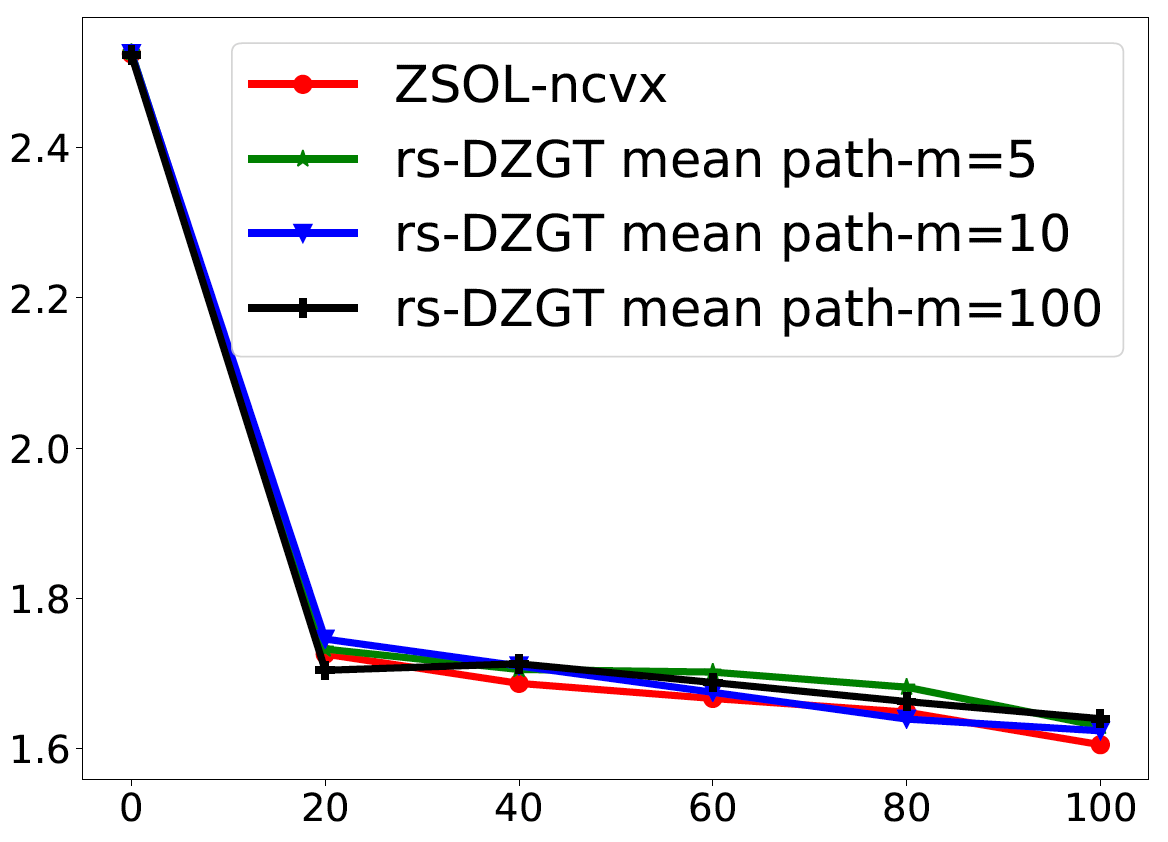}
\end{minipage}
\end{tabular}
\end{table} 
\captionof{figure}{Sample average implicit objective function computed by Algorithm $\ref{alg:DZGT}$ for various network sizes compared to sample average implicit objective function computed by ZSOL-ncvx. The \fy{stepsize} in the two rows are different.  }
\label{fig:comparison}}
\end{figure*}

\begin{table}[th]
\caption{Consensus \fy{error} for Algorithm~\ref{alg:DZGT} at the last epoch, under the first stepsize setting}
\label{table_example}
\begin{center}
\begin{tabular}{|c|c|c|c|}
\hline
Setting & Ring graph & Sparse graph & Complete graph\\
\hline
m=1 & 0 & 0 & 0\\
\hline
m=5 & 6.1266e-4 & 5.7364e-4 & 3.4662e-4\\
\hline
m=10 & 8.5629e-3 & 7.8391e-3 & 5.5593e-3\\
\hline
m=100 & 5.3591e-2 & 3.4981e-2 & 2.0635e-2\\
\hline
\end{tabular}
\end{center}
\end{table}

\begin{table}[h]
\caption{Consensus  \fy{error} for Algorithm~\ref{alg:DZGT} at the last epoch, under the second stepsize setting}
\label{table_example2}
\begin{center}
\begin{tabular}{|c|c|c|c|}
\hline
Setting & Ring graph & Sparse graph & Complete graph\\
\hline
m=1 & 0 & 0 & 0\\
\hline
m=5 & 3.6256e-3 & 2.9179e-3 & 2.3788e-3\\
\hline
m=10 & 2.0432e-2 & 1.5944e-2 & 1.2209e-2\\
\hline
m=100 & 6.1803e-1 & 4.4830e-1 & 3.4137e-2\\
\hline
\end{tabular}
\end{center}
\end{table}

\section{CONCLUSIONS}\label{sec:conc}
\fy{The mathematical} program with equilibrium constraint (MPEC) is a powerful model that captures several important problem classes such as Stackelberg games, bilevel optimization problems, and traffic equilibrium problems, to name a few. In this work, we consider stochastic variants of MPECs. Motivated by the absence of distributed \fy{schemes} for resolving this challenging mathematical model, we develop a novel gradient tracking method. Leveraging a randomized smoothing technique and inexact evaluations of the lower-level solutions, we develop a fully iterative distributed gradient tracking method. We derive complexity guarantees for computing a stationary point to the implicit optimization problem. We compare our method with its centralized counterpart and validate the theoretical guarantees over networks of different sizes and connectivity levels. Weakening the strong monotonicity assumption of the lower-level map is \fy{one interesting direction of} our future research. \fy{One possible avenue for addressing this appears to lie in employing iterative penalization (or regularization)~\cite{jalilzadeh2022stochastic,kaushik2021method}.}









\bibliographystyle{siam}

\bibliography{references_fy}

\end{document}